\crefname{section}{Section}{Sections}
\crefname{subsection}{Subsection}{Subsections}
\crefname{appendix}{Appendix}{Appendix}
\crefname{figure}{Figure}{Figures}
\crefname{table}{Table}{Tables}
\crefname{property}{Property}{Properties}
\crefname{theorem}{Theorem}{Theorem}
\newtheorem{proposition}{Proposition}
\theoremstyle{remark} }
\newcommand\dd{\mathrm{d}}
\newcommand\pd[2]{\dfrac{\partial {#1}}{\partial {#2}}}
\def\bF{\boldsymbol{F}}
\def\bbS{\mathbb{S}}
\def\bbR{\mathbb{R}}
\def\bsOmega{\boldsymbol{\Omega}}
\def\bx{\boldsymbol{x}}
\def\bn{\boldsymbol{n}}
\def\ie{\mathrm{e}}
\numberwithin{equation}{section}
\title {Unified Gas-Kinetic Particle Method for Frequency-dependent Radiation Transport}
\author{Weiming Li \thanks{Institute of Applied Physics and Computational Mathematics, Beijing,
    China, email: {\tt li\_weiming@iapcm.ac.cn}},
 ~~Chang Liu \thanks{Corresponding author. Institute of Applied Physics and Computational Mathematics, Beijing,
    China, email: {\tt liuchang@iapcm.ac.cn}},
    ~~Peng Song \thanks{Institute of Applied Physics and Computational Mathematics,
   Beijing, China and HEDPS, Center for Applied Physics and Technology, College of Engineering, Peking University, Beijing, China, email: \tt{song\_peng@iapcm.ac.cn}}}
\begin{document}
\maketitle

\begin{abstract}

This paper proposes a unified gas-kinetic particle (UGKP) method for the frequency-dependent photon transport process.
The photon transport is a typical multiscale process governed by the nonlinear radiative transfer equations (RTE). 
The flow regime of photon transport varies from the ballistic regime to the diffusive regime with respect to optical depth and photon frequency. 
The UGKP method is an asymptotic preserving (AP) scheme and a regime adaptive scheme. 
The teleportation error is significantly reduced, and the computational efficiency is remarkably improved in the diffusive regime.
Distinguished from the standard multigroup treatment, the proposed UGKP method solves the frequency space in a non-discretized way. Therefore the Rosseland diffusion system can be precisely preserved in the optically thick regime.
Taking advantage of the local integral solution of RTE, the distribution of the emitted photon can be constructed from its macroscopic moments. 
The Monte Carlo particles in the UGKP method need only be tracked before their first collision events, and a re-sampling process is performed to close the photon distribution for each time step. 
The large computational cost of excessive scattering events can be saved, especially in the optically thick regime. 
The proposed UGKP method is implicit and removes the light speed constraint on the time step. 
The particle tracking approach combining the implicit formulation makes the proposed UGKP method an efficient solution algorithm for frequency-dependent radiative transfer problems. 
We demonstrate with numerical examples the capability of the proposed multi-frequency UGKP method.

  \vspace*{4mm}
  \noindent {\bf key word:} radiative transfer equation; frequency-dependent; asymptotic
  preserving; regime adaptive; Multiscale method

\end{abstract}

\section{Introduction}
Radiation transport is the essential energy transfer process in astrophysics and high-energy-density physics \cite{MRE1,MRE2,MRE3,MRE4}.
It is described by a system of nonlinear equations, which includes a kinetic equation modeling the transport of photons and an equation for material energy evolution. 
As the kinetic equation depends on time, space, angular, and frequency coordinates, it is generally seven-dimensional, making it computationally expensive. 
To decrease the dimensionality, 
simplified models are sometimes used. 
In optically thick regions where the mean free path is small compared to the characteristic length, the propagation of photons is
essentially a diffusion process. 
In such media, the Rosseland diffusion equation is naturally used as the asymptotic limit of the radiative transfer equations \cite{Larsen1983}. 
Its flux-limited form \cite{levermore1981flux} is also used \cite{whitehouse2004smoothed,mayer2007fragmentation}.
Another approach for reducing the dimensionality is to use the moment models, which replace directed radiation with a direction-independent radiative flux\cite{sherman1967moment,li2021nonlinear}. 

For decades, there has been a continued effort to develop accurate and efficient numerical schemes to solve radiative transfer equations.
Numerical schemes can be categorized as deterministic methods and stochastic methods.
One of the most commonly used deterministic methods is the discrete ordinate method ($S_N$), which uses a quadrature rule to discretize the angular variable. 
The most popular stochastic method is the implicit Monte Carlo (IMC) method originally proposed in \cite{fleck1971}. 
The IMC explicitly calculates the physical events of photon absorption, scattering, advection, and emission by utilizing a linearization of the re-emission physics. 
Deterministic methods suffer from excessive numerical dissipation in optically thick regimes. 
High-order schemes \cite{maginot2016high,xiong2022high} have been developed to improve accuracy.
The flow regime of photon transport changes significantly with respect to material opacity and photon frequency. 
Much effort has been put into developing asymptotic preserving (AP) schemes to simulate multiscale photon transport problems. 
The AP schemes are consistent and stable for the limiting diffusion equation as the mean free path goes to zero and has uniform error estimates with respect to the mean free path \cite{jin1999efficient,xu2010unified,jin2010asymptotic}. 
A class of high-order asymptotic preserving schemes for nonlinear gray radiative transfer equations have been developed under the DG-IMEX 
framework \cite{xiong2022high}. 
For the Monte Carlo method, the excessive numerical dissipation results in the heat wave propagating faster than in reality in 
opaque material. Such error is usually described by the term teleportation error and can be reduced by source tilting \cite{densmore2011asymptotic, shi2021continuous}. 
Recently, an implicit semi-analog Monte Carlo method \cite{poette2020new, steinberg2022multi} is developed, which eliminates teleportation
error by sampling emitting photons only from existing material particles.   

There have also been continued efforts to improve the efficiency of numerical schemes in solving the radiative transfer equation. 
Both deterministic methods and the traditional Monte Carlo method suffer from low efficiency in optically thick regimes. 
The $S_N$ method often uses source iteration for inverting the discretized system, but it often has a poor convergence rate for optically thick regions \cite{adams2002fast}.
Methods for speeding up the convergence of the $S_N$ method in diffusive regimes include the diffusion synthetic acceleration (DSA) method,  which is essentially a preconditioning technique solving a diffusion problem in every iteration \cite{Morel1982Synthetic,palii2020on}. 
Other approaches include multiscale acceleration techniques such as the multi-scale high order/low order (HOLO) algorithm \cite{chacon2017multiscale} and the unified gas-kinetic scheme (UGKS) \cite{xu2010unified}. 
The HOLO-type algorithm solves a high-order (HO) microscopic system and a low-order (LO) macroscopic system through bi-directional nonlinear
coupling. 
It utilizes nonlinear elimination to enslave the HO component in the LO one so that various preconditioning strategies can be employed to accelerate the resulting residual system. 
The UGKS couples transport and collision processes using a multiscale flux function obtained from the integral solution of the 
kinetic model equation. 
Initially developed in the rarefied gas dynamics \cite{xu2010unified}, the UGKS has been applied to a wide range of transport processes \cite{liu2021unified,liu2019limitation}, including photon transport \cite{sun2015asymptotic1,sun2015asymptotic2}.
Other approaches have also been developed to speed up convergence for iteration schemes in the field of rarefied gas dynamics, such as the general synthetic iterative scheme (GSIS) \cite{wu2017431, su2020} and the discrete unified gas-kinetic scheme (DUGKS) \cite{guo2013dugks, guo_progress_DUGKS}.
In optically thick regions, the Monte Carlo method also suffers from low efficiency due to the need to compute a large amount of effective scattering.
In order to improve efficiency, various hybrid methods are proposed, where a Monte Carlo solver is employed in optically thin regions, 
and a diffusion equation is solved in optically thick regions \cite{wollaeger2013radiation, keady2017improved}. 
The unified gas-kinetic particle method (UGKP) and unified gas-kinetic wave-particle method (UGKWP) have been proposed \cite{liweiming2020, shi2020}. 
These are multiscale methods that consistently couple the evolution of macroscopic and microscopic equations and do not need to employ multiple effective scattering events. 
Therefore, the computation cost could be significantly reduced in optically thick regimes. 
Most of the above-mentioned method for improving accuracy and efficiency are originally designed for the gray approximation to the transport equation, and some of them has been extended to frequency-dependent cases.



In our recent work \cite{liweiming2020}, we apply the UGKP and UGKWP framework proposed in \cite{liu2019} to gray radiative transfer. 
Both methods use a mixture of discrete particles and analytical functions to represent the specific intensity's angular dependency. 
This formulation works well for problems where opacity weakly depends on frequency. 
However, in real-world applications, we deal with situations where opacity strongly depends on frequency. 
In this work, we present the extension of the UGKP method to multiple frequency scenarios.
The main highlights of our work are : (i) our method targets frequency-dependent photon transport, where opacities vary widely in both space and frequency;
(ii) in optically thick regions,
our method converges to an implicit discretization of the limiting Rosseland diffusion equation, enabling us to take large time steps; 
(iii) radiation and material energy are fully coupled in our new scheme. We do not employ either the linearization of the implicit Monte Carlo method or the splitting between effective scattering and absorption, as utilized in our previous paper \cite{liweiming2020}.

The remainder of the paper is structured as follows. 
We first introduce the basics of the radiative transfer equation and its diffusion limit in Section 2, then present our UGKP method in Section 3.  
A numerical analysis of the proposed UGKP method is presented in Section 4.  
We then present numerical results, including the Marshak wave and hohlraum examples, to show the capabilities of this method in Section 5. 

\section{Radiative transfer equation}

The equations of radiative transfer describe the interaction between the radiation field and matter. It considers the processes of absorption, 
scattering, and emission of photons. Photons are treated as point, massless particles, and their wave-like behavior is omitted. 

We consider the frequency-dependent radiative transfer system of equations. Scattering is currently omitted and will be considered in future work.
In the absence of hydrodynamic motion and heat conduction, and under the assumption of local thermodynamic equilibrium (LTE) with induced
processes taken into account, the system of radiative transfer equations takes the form \cite{Pomraning1973}:
\begin{subequations}\label{eq:rte}
\begin{align}
\label{eq:I_eq}
& \dfrac{\epsilon^2}{c}\dfrac{\partial I}{\partial t}+\epsilon\bsOmega\cdot\nabla I =-\sigma I + \sigma B(\nu, T),\\[3mm]
\label{eq:T_eq}
& \epsilon^2 C_v\dfrac {\partial T}{\partial t} = \int_0^\infty \left(\sigma \int_{\bbS^2} I(\bsOmega)\dd\bsOmega - \sigma 4 \pi B(\nu,T)\right) \dd\nu.
\end{align}
\end{subequations}
$\epsilon$ is a nondimensionalization parameter defined as the ratio between the typical 
mean free path of photons and the typical length scale of the physical system.  The specific intensity of photons, $I(t,\bx,\bsOmega,\nu)$, is a 
function of time $t\in\bbR^+$, physical space coordinates $\bx\in D_x \subset \bbR^D$, angular direction $\bsOmega \in \bbS^2$ and frequency $\nu \in \bbR^+$, 
with
\begin{equation}\label{eq:Planck}
B(\nu,T) = \dfrac{2 h \nu^3}{c^2}\dfrac{1}{\ie^{h\nu / kT}-1},
\end{equation}
satisfying
\begin{equation}
\int_0^\infty \int_{\bbS^2} B(\nu, T)\dd\bsOmega \dd\nu = acT^4.
\end{equation}
In equation \eqref{eq:rte}, $C_v$ is the specific heat capacity of 
the background material, $\sigma$ is the absorption/emission coefficient which takes into account induced emission, 
$-\sigma I$ characterizes the absorption process, and $\sigma B(\nu, T)$ 
describes the emission of photons under the assumptions of local thermodynamic equilibrium. In equation \eqref{eq:Planck}, $h$ is the Planck constant, 
$c$ is the speed of light, $k$ is the Boltzmann constant, and $a$ is the radiation constant satisfying
\begin{equation}
a  = \dfrac{8 \pi^5 k^4}{15 h^3 c^3}.
\end{equation}
Though limited in scope, equation \eqref{eq:rte} is nevertheless
useful in many applications and has been studied widely in literature \cite{larsen1988grey, sun2015asymptotic2,Hammer2019}.

We impose the initial condition
\begin{equation}\label{eq:initial_cond}
I(0,\bx,\bsOmega,\nu) = I_0(\bx,\bsOmega,\nu),\quad T(0,\bx) = T_0(\bx),
\end{equation}
where $I_0$ is a specified function relying on space, angle, and frequency, while the initial temperature $T_0$ is a specified function of space.

An important case of the boundary condition is the inflow boundary condition. 
It is sufficient to specify the 
specific intensity at all points on the surface in the incoming direction, which implies that 
\begin{equation}\label{eq:boundary_cond_inflow}
I(t,\bx_{bd},\bsOmega,\nu) = I_{bd}(t,\bx_{bd},\bsOmega,\nu), \bn \cdot \bsOmega < 0,
\end{equation}
where $\bn$ is the outer normal direction for $\bx \in \partial D_x$. 

Equation \eqref{eq:rte}, together with the initial condition \eqref{eq:initial_cond} and boundary condition such as \eqref{eq:boundary_cond_inflow} 
completely specify 
the system of radiative transfer equation.

The high dimensionality and nonlinearity of equation \eqref{eq:rte} make it expensive to discretize. Therefore, some limiting regimes are identified.
 Two important limiting regimes are often 
considered.  The first regime is the free-streaming limit, where $\sigma = 0$, which models optically thin regions.
In this case, photons transport freely without interaction with 
matter, and the material temperature remains constant in time. For the second regime, the assumptions are that the mean free path is small compared to the characteristic scale, and the solutions vary slowly in space and time  \cite{Larsen1983}. It models the optically thick regions, and
is sometimes called the parabolic scaling
limit \cite{tang2021} and is obtained by sending $\epsilon$ to $0$. It was shown in \cite{Larsen1983} that 
as $\epsilon$ approaches $0$, away from the boundary and initial layers, the specific intensity satisfy
\begin{equation}\label{eq:I_ap}
I(t,\bx,\bsOmega,\nu) = B(\nu,T)-\dfrac{\epsilon}{\sigma}\bsOmega\cdot\nabla B(\nu,T) + \mathcal{O}(\epsilon^2),
\end{equation}
from which one could obtain Fick's law
\begin{equation}\label{eq:Fick_law}
\int_0^\infty \int_{\bbS^2} \bsOmega I(t,\bx,\bsOmega,\nu) \dd \bsOmega \dd \nu = \bF = -\dfrac{\epsilon ac}{3\sigma_R}\nabla T^4.
\end{equation}
In equation \eqref{eq:Fick_law}, 
\begin{equation}\label{eq:sigmaR}
\sigma_R = \dfrac{\displaystyle{\int_0^\infty \pd{B}{T} \dd\nu}}{\displaystyle{\int_0^\infty \dfrac{1}{\sigma}\pd{B}{T} \dd\nu}}
\end{equation}
is the Rosseland mean cross-section.
Integrating \eqref{eq:I_eq} and adding it to \eqref{eq:T_eq}, and then applying Fick's law yields the Rosseland 
diffusion limit equation
\begin{equation}\label{eq:rosseland_diffusion_limit}
\pd{}{t}\left(U_m + a T^4\right) = \nabla \cdot \left(\dfrac{a c}{3 \sigma_R}\nabla T^4\right),
\end{equation}
where $C_v = \pd{U_m}{T}$.
As equation \eqref{eq:rosseland_diffusion_limit}  is the asymptotic limit of the radiative transfer equation as $\epsilon\rightarrow0$, 
it is natural to require that asymptotic preserving schemes limit  to a discretization of equation \eqref{eq:rosseland_diffusion_limit} in optically thick regions.

To our knowledge, most asymptotic preserving schemes for the frequency-dependent radiative transfer equation in current literature such as \cite{sun2018asymptotic}
are based on multigroup discretization of frequency as described in \cite{Pomraning1973}. The multigroup method does not treat frequency as a continuous
variable. Instead, the frequency interval is truncated and divided into $G$ frequency groups $\nu \in \cup [\nu_g, \nu_{g+1}]$, and the
$g$th group specific intensity is defined as
\begin{equation}
I_g(t,\bx,\bsOmega) = \int_{\nu_g}^{\nu_{g+1}} I(t,\bx,\bsOmega,\nu) \dd\nu.
\end{equation}
Equation \eqref{eq:I_eq} is integrated over $\forall g$, $\nu\in\cup[\nu_g,\nu_{g+1}]$ to obtain
\begin{equation}\label{eq:rte_multigroup}
\dfrac{\epsilon^2}{c}\dfrac{\partial I_g}{\partial t}+\epsilon\bsOmega\cdot\nabla I_g =-\sigma_g I_g + \int_{\nu_g}^{\nu_{g+1}} \sigma(\nu) B(\nu, T) \dd\nu,
\end{equation}
where $\sigma_g$ is taken as either a group Rosseland or group Planck mean. Although the Planck mean absorption coefficient  is 
more appropriate for optically thin cases,
the Rosseland mean is the more appropriate one for optically thick limits. As pointed out in \cite{Pomraning1973}, these two means differ greatly for realistic
absorption coefficients, and either is only strictly appropriate in its limiting circumstances. In practice, one or the other is generally used, based on which
asymptotic preserving schemes for frequency-dependent radiative transfer equation in current literature is developed \cite{sun2018asymptotic}. 
We aim to develop a scheme that 
asymptotically preserves the exact Rosseland diffusion limit for optically thick regions, and also preserves the correct free-streaming limit.

%
%

\section{Multi-frequency unified gas-kinetic particle method}
In our previous work \cite{liweiming2020}, we first presented a unified gas-kinetic particle (UGKP) method for the gray equation of transfer. 
A similar method was presented and discussed in \cite{shi2020}.
In this section, we extend the  UGKP method to frequency-dependent cases. As was for the gray equation of transfer, the evolution of microscopic simulation particle is coupled with the evolution of macroscopic energy in the UGKP method. In the following, the scheme for the evolution of microscopic particles and macroscopic energy field will be introduced. For simplicity, the method will be presented for the two-dimensional case on the Cartesian mesh. Its 
extension to 3D is straightforward. Extension to general quadrilateral mesh is part of ongoing work.

\subsection{Particle evolution}\label{sec:UGKP_micro}
The microscopic evolution of the UGKP method uses the particle-based Monte Carlo solver to update the specific 
intensity in equation \eqref{eq:rte}. Assume the total number of particles to be $N_p$. Each particle $p_k = (w_k,\bx_k(t),\bsOmega_k,\nu_k)$
is characterized by its weight $w_k$, 
position $\bx_k$, velocity angle $\bsOmega_k$, and frequency $\nu_k$, $1\leq k\leq N_p$.  Particles are
generated from the 
initial condition, boundary source, and re-sampling at the end of each time step.  Initially, particles are sampled such that particle position,
 velocity angle, and frequency follow the distribution $I(t_0,\bx,\bsOmega,\nu)$, i.e.
 \begin{equation}
 I(t_0,\bx,\bsOmega,\nu) = \sum\limits_{k=1}^{N_p} \dfrac{w_k}{V} \delta(\bx-
 \bx_k(t_0))\delta(\bsOmega-\bsOmega_k)\delta(\nu-\nu_k),
 \end{equation}
 where $V$ is the volume of the cell in wich the particle resides. At any
 instant $t$, $w_k$ satisfy
 \begin{equation}
 \sum\limits_{k=1}^{N_p}w_k = \int_{D_x}\int_0^\infty \int_{\bbS^2} I(t,\bx,
 \bsOmega,\nu) \dd\bsOmega \dd\nu \dd\bx,
 \end{equation}
 where $D_x$ is the computation domain.


%

Different from the UGKP method for the gray equation of transfer
as discussed in \cite{liweiming2020}, we do not adopt the strategy of using the transformation of the IMC method and splitting for evolving the 
specific intensity. Instead, we 
evolve particle information by the integral solution of \eqref{eq:rte} which takes the form:
\begin{equation}\label{eq:integral_solution}
\begin{split}
I(t,\bx,\bsOmega,\nu) = & \ie^{-c \sigma(\nu,T) t /\epsilon^2} I\left(0,\bx-\dfrac{c\bsOmega}{\epsilon} t ,\bsOmega,\nu\right) \\[3mm]
& + \int_{0}^t \ie^{-c \sigma(\nu,T)(t-s)/\epsilon^2}
 \dfrac{c \sigma(\nu,T)}{\epsilon^2}  B\left(\nu, T\left(s,\bx-\dfrac{c \bsOmega}{\epsilon}(t-s)\right)\right)\dd s.
 \end{split}
\end{equation}
A second-order approximation of equation \eqref{eq:integral_solution} gives
\begin{equation}\label{eq:integral_solution_approx}
\begin{split}
I(t,\bx,\bsOmega,\nu) =  & \ie^{-c \sigma(\nu,T) t /\epsilon^2} I\left(0,\bx-\dfrac{c\bsOmega}{\epsilon} t ,\bsOmega,\nu\right) +
C_1(t)B(\nu,T(t,\bx))\\[3mm]
&+C_2(t)\pd{B}{T}(\nu,T(t,\bx))\pd{T}{t}(t,\bx)+C_3(t)\pd{B}{T}(\nu,T(t,\bx))\bsOmega\cdot\nabla T(t,\bx),
\end{split}
\end{equation}
where
\begin{equation}\label{eq:coef_C1}
C_1(t)= 1 - \ie^{-c \sigma t / \epsilon^2},
\end{equation}
\begin{equation}
C_2(t)=\dfrac{-\epsilon^2+\ie^{-c \sigma t/\epsilon^2}(\epsilon^2+c\sigma t)}{c\sigma},
\end{equation}
\begin{equation}
C_3(t)=\dfrac{-\epsilon^2+\ie^{-c \sigma t/\epsilon^2}(\epsilon^2+c\sigma t)}{\epsilon\sigma}.
\end{equation}
Equation \eqref{eq:integral_solution} states that for a time period $t$, the probability of particle free-streaming is $\exp(- c \sigma(\nu,T) t / \epsilon^2)$. 
Consequently, the particle has a probability 
of $1-\exp(-c \sigma(\nu,T) t / \epsilon^2)$ to be absorbed and re-emitted.
The distribution of the re-emitted particles is analytically given in Eq. \eqref{eq:integral_solution} as the second integral term.
UGKP does not track the re-emission process for the absorbed particles. 
Instead, the absorbed particles are re-sampled from the approximated distribution \eqref{eq:integral_solution_approx} at the end of the time step to close the distribution.  
Compared to the traditional Monte Carlo method, we do not need to  resolve each absorption  and
 re-emission process, so multiple absorption and emission events can happen for each particle within a time step, and therefore 
the UGKP method achieves high efficiency. Compared to the UGKP method we employed for the gray equation
of transfer,  the total cross-section
instead of the effective scattering cross-section is used to determine free-streaming particles. Therefore, for cases where the absorption coefficient
is significant but the effective scattering term does not dominate, fewer particles are tracked in our new strategy, and higher efficiency is achieved. 

At any time step $t_n$, we define
\begin{equation}
\mathcal{P}^n_{\mathrm{D}}=\{p_k|\bx_k(t_n) \in D_x\}.
\end{equation}
We denote by $\mathcal{P}_{\mathrm{in}}^n$ the set of all particles flowing
into the domain $D_x$ within the time step $[t_{n-1},t_n]$, by $\mathcal{P}_{\mathrm{out}}^n$ all particles flowing out of $D_x$ within the
same period, and by $\mathcal{P}_{\mathrm{C}}^n$ the set of all particles 
for which absorption event occurs between $t_{n-1}$ and $t_n$. $\mathcal{P}_{\mathrm{r}}^n$ is used to denote the set of all re-sampled particles at $t_n$. The re-sampling process will be discussed specifically in
Section \ref{sec:re-sample}. The UGKP method satisfies
\begin{equation}
\mathcal{P}^n_{\mathrm{D}} = \mathcal{P}^{n-1}_{\mathrm{D}}+\mathcal{P}^n_{\mathrm{in}}
-\mathcal{P}^n_{\mathrm{out}}-\mathcal{P}^n_{\mathrm{C}}
+\mathcal{P}^n_{\mathrm{r}}.
\end{equation}

For evolving the trajectory of each Monte Carlo particle, we consider  three possible types of events  that could occur as 
each particle travels through the background medium: absorption, boundary crossing, and survival at the end of the time step. 
The distance traveled by a particle before 
absorption occurs satisfy
\begin{equation}\label{eq:col_dis}
d_{\mathrm{COL}} = \dfrac{\epsilon^2}{\sigma} |\ln \xi |,
\end{equation}
where $\xi$ is a random number that follows a uniform distribution on $(0,1)$. When an absorption event occurs, a particle is removed from the 
system. Such particles are then re-sampled from the integral solution at the end of the time step.  The distance to boundary crossing $d_\mathrm{B}$ satisfy
\begin{equation}
\bx_{B} - \bx_{p} = d_{\mathrm{B}} \bsOmega,
\end{equation}
where $\bx_p$ is the original location of the tracked particles, $\bx_B$ is the location of the cell interface it crosses, and $\bsOmega$ is the 
direction of particle flight. The distance to the census,
in which case a particle survives during the whole time step, is 
\begin{equation}
d_{\mathrm{CEN}} = c(t_{n+1}-t),
\end{equation}
where $t$ is the current time of the particle considered. The distance each particle travel during one cycle of the tracking process is 
$d =\text{min}\{d_{\mathrm{COL}}, d_{\mathrm{B}}, d_{\mathrm{CEN}}\}$. The tracking process for each particle is continued until the particle has leaked out of
 the system, been absorbed, or reaches the end of the specific time step.

Under our formulation,  at any specified $t_n$,
particle position, velocity angle, and frequency follow
the distribution $I(t_n,\bx,\bsOmega,\nu)$. Therefore, all information on the specific intensity's reliance on frequency is kept and
the specific intensity is not group-averaged.


\subsection{Macroscopic evolution}
As described in Section \ref{sec:UGKP_micro}, the distribution of re-sampled particles rely on material temperature, 
therefore we evolve it by solving a set of macroscopic equation coupling it to radiation energy. Define
\begin{equation}
\langle g \rangle =  \int_0^\infty \int_{\bbS^2} g \dd \bsOmega \dd\nu,
\quad
\rho = \langle I \rangle.
\end{equation}
 Integrate equation \eqref{eq:I_eq} with respect to $\bsOmega\in \bbS^2$ and $\nu \in \bbR^+$, 
and combining the result with \eqref{eq:T_eq},
we obtain the coupled macroscopic equations:
\begin{subequations}\label{eq:macro_rte}
\begin{align}
\label{eq:rho_macro_eq}
& \dfrac{\epsilon^2}{c}\dfrac{\partial \rho}{\partial t}+\epsilon \nabla \cdot \langle \bsOmega I \rangle =-\langle \sigma I \rangle +
4 \pi \int_0^\infty \sigma B(\nu, T) \dd \nu,\\[3mm]
\label{eq:T_macro_eq}
& \epsilon^2 C_v\dfrac {\partial T}{\partial t} = \langle \sigma I \rangle - 4 \pi\int_0^\infty \sigma  B(\nu,T) \dd\nu.
\end{align}
\end{subequations}

%
%
%
Let 
\begin{equation}\label{eq:Ur}
U_r = a c T^4,
\end{equation}
and
\begin{equation}\label{eq:normalized_planck}
b(\nu, T) = \dfrac{4\pi}{U_r} B(\nu,T),
\end{equation}
we have
\begin{equation}
\int_0^\infty b(\nu, T) \dd \nu = 1.
\end{equation}
Defining
\begin{equation}\label{eq:sigmaP}
\sigma_p = \int_0^\infty \sigma(\nu,T) b(\nu,T)\dd\nu,
\end{equation}
 and $\beta = 4 a c T^3$, then multiplying \eqref{eq:T_macro_eq} by $\beta$, we obtain 
 \begin{equation}\label{eq:Ur_eq}
\epsilon^2 C_v \pd{U_r}{t} = \beta (\langle \sigma I \rangle - \sigma_p U_r).
\end{equation}
In the UGKP method, we solve the coupled system of equations
\begin{subequations}\label{eq:macro_rte2}
\begin{align}
\label{eq:rho_macro_eq2}
& \dfrac{\epsilon^2}{c}\dfrac{\partial \rho}{\partial t}+\epsilon \nabla \cdot \langle \bsOmega I \rangle =-\langle \sigma I \rangle +
\sigma_p U_r,\\[3mm]
\label{eq:T_macro_eq2}
& \epsilon^2 C_v \pd{U_r}{t} = \beta (\langle \sigma I \rangle - \sigma_p U_r).
\end{align}
\end{subequations}
The macroscopic equation \eqref{eq:macro_rte2} is solved using the finite
 volume method. 
 Denote $i$ and $j$ as the indices for cell $[x_{i-\frac12},x_{i+\frac12}] \times [y_{j-\frac12},y_{j+\frac12}]$. 
 The spatial widths are $\Delta x_i = x_{i+\frac12} - x_{i-\frac12}$ and $\Delta y_j = y_{j+\frac12} - y_{j-\frac12}$ for $x$ and $y$ 
 directions respectively, and the time step size $\Delta t = t_{n+1}-t_n$. We use $\rho^n_{i,j}$ and $T^n_{i,j}$ to approximate the solution $\rho$ and 
 $T$ at time $t_n$ and cell $(i,j)$. Other cell-averaged quantities are
 defined analogously.
Integrating \eqref{eq:rho_macro_eq2} in space  $[x_{i-\frac12},x_{i+\frac12}] \times [y_{j-\frac12},y_{j+\frac12}]$ and
time $[t_n, t_{n+1}]$,  we obtain
\begin{equation}\label{eq:discretize_rho}
\dfrac{\rho^{n+1}_{i,j} - \rho^n_{i,j} }{\Delta t}+\dfrac{\mathcal{F}_{i+\frac12,j}-\mathcal{F}_{i-\frac12,j}}{\Delta x_i}
+\dfrac{\mathcal{G}_{i,j+\frac12}-\mathcal{G}_{i,j-\frac12}}{\Delta y_j} = -\dfrac{c}{\epsilon^2}( S^{n+1}_{i,j} - \sigma^{n+1}_{p,i,j}U^{n+1}_{r,i,j}),
\end{equation}
 while equation \eqref{eq:T_macro_eq2} is discretized by backward Euler in the time variable as
\begin{equation}\label{eq:discretize_Ur}
\epsilon^2 C^{n+1}_{v,i,j} \dfrac{U^{n+1}_{r,i,j}-U^n_{r,i,j}}{\Delta t} = \beta^{n+1}_{i,j}  (S^{n+1}_{i,j}-\sigma^{n+1}_{p,i,j} U^{n+1}_{r,i,j}).
\end{equation}
In both equation \eqref{eq:discretize_rho} and equation \eqref{eq:discretize_Ur}, the source term
\begin{equation}\label{eq:source_define}
S^{n+1}_{i,j} = \dfrac{1}{\Delta x_i \Delta y_j} \int_{x_{i-\frac12}}^{x_{i+\frac12}} \int_{y_{j-\frac12}}^{y_{j+\frac12}} \langle \sigma^{n+1} I^{n+1} \rangle
\dd x \dd y.
\end{equation}
For the situations where $\sigma$ is independent of frequency $\nu$, 
a second-order approximation to the source term gives
\begin{equation}
S^{n+1}_{i,j} = \dfrac{1}{\Delta x_i \Delta y_j} \int_{x_{i-\frac12}}^{x_{i+\frac12}} \int_{y_{j-\frac12}}^{y_{j+\frac12}} \sigma^{n+1}\langle  I^{n+1} \rangle
\dd x \dd y = \sigma^{n+1}_{i,j} \rho^{n+1}_{i,j}.
\end{equation}
However, in general, for frequency-dependent radiation transport, the source term is not closed. From first-order approximation to \eqref{eq:integral_solution_approx}, and taking into consideration consistency
between microscopic and macroscopic variables $I$ and $\rho$,
we postulate a closure by taking
$I^{n+1}$ to follow 
the distribution:
\begin{equation}\label{eq:integral_solution_approx_firstorder}
\begin{split}
I(t,\bx,\bsOmega,\nu) =    & \ie^{-c \sigma(\nu,T) t /\epsilon^2} I\left(0,\bx-\dfrac{c\bsOmega}{\epsilon} t ,\bsOmega,\nu\right) \\[3mm]
& +
\dfrac{1-\ie^{-c \sigma(\nu,T) t/\epsilon^2}}{1 - \int_0^\infty \ie^{-c \sigma(\nu,T) t/\epsilon^2} b(\nu,T)\dd\nu} E^+(t,\bx,\bsOmega) b(\nu,T),
\end{split}
\end{equation}
where
\begin{equation}
E^+(t,\bx,\bsOmega) = \dfrac{1}{4\pi}(\rho-E^{\mathrm{free}}),
\end{equation}
and $E^{\mathrm{free}}$ is the total energy of free-transport particles. The set of all free-transport particles at $t_{n+1}$ within the 
domain $D_x$ is
\begin{equation}
\mathcal{P}^{n+1}_{\mathrm{free}} = \mathcal{P}^n_{\mathrm{D}}+\mathcal{P}^{n+1}_{\mathrm{in}}
-\mathcal{P}^{n+1}_{\mathrm{out}}-\mathcal{P}^{n+1}_\mathrm{C}.
\end{equation}
Therefore, for cell $(i,j)$,
\begin{equation}\label{eq:integral_solution_discrete}
\begin{split}
I^{n+1}_{i,j} = & \ie^{-c\sigma(\nu,T^n)\Delta t /\epsilon^2} I\left(t_n,x_i-\frac{c\Omega_x}{\epsilon}\Delta t, y_j - \frac{c \Omega_y}{\epsilon}\Delta t, \bsOmega,\nu\right)
 +\dfrac{1-\ie^{-c \sigma(\nu,T^{n+1}_{i,j}) \Delta t/\epsilon^2}}{1 - \int_0^\infty \ie^{-c \sigma(\nu,T^{n+1}_{i,j}) \Delta  t/\epsilon^2} b(\nu,T^{n+1}_{i,j})\dd\nu} \\[3mm]
& \times \dfrac{1}{4\pi}\left(\rho^{n+1}_{i,j}-\sum\limits_{p_k\in\mathcal{P}^{n+1}_{\mathrm{free}}} \dfrac{w_k}{\Delta x_i \Delta y_j} \chi_{[x_i,x_{i+1}]\times[y_j,y_{j+1}]} (\bx_k(t_{n+1}) )\right) b(\nu,T^{n+1}_{i,j}).
\end{split}
\end{equation}
 In equation \eqref{eq:integral_solution_discrete}, $w_k$ is the weight of the 
 particle $p_k$,
$\chi$ is the characteristic function, and $\bx_k(t_{n+1})$ is the position of the particle $p_k$ at time $t_{n+1}$. 
Therefore, by plugging \eqref{eq:integral_solution_discrete} 
into \eqref{eq:source_define}, 
the source term is discretized by
\begin{equation}\label{eq:source_close}
\begin{split}
S^{n+1}_{i,j} = & \sum\limits_{p_k\in\mathcal{P}^{n+1}_{\mathrm{free}}} \dfrac{w_k}{\Delta x_i \Delta y_j} \chi_{[x_i,x_{i+1}]\times[y_j,y_{j+1}]}(\bx_k(t_{n+1})) \sigma(\nu_k, T^{n+1}_{i,j})    
\\[3mm]
& + 
\dfrac{\int_0^\infty \sigma(\nu,T^{n+1}_{i,j})(1-\ie^{-c \sigma(\nu,T^{n+1}_{i,j}) \Delta t/\epsilon^2})b(\nu,T^{n+1}_{i,j}) \dd\nu }{1 - \int_0^\infty \ie^{-c \sigma(\nu,T^{n+1}_{i,j}) \Delta  t/\epsilon^2} b(\nu,T^{n+1}_{i,j})\dd\nu} \\[3mm]
& \times \left(\rho^{n+1}_{i,j}-\sum\limits_{p_k\in\mathcal{P}^{n+1}_{\mathrm{free}}} \dfrac{ w_k }{\Delta x_i \Delta y_j} \chi_{[x_i,x_{i+1}]\times[y_j,y_{j+1}]}(\bx_k(t_{n+1}))\right).
\end{split}
\end{equation}
Note that when $\sigma$ is independent of $\nu$, equation \eqref{eq:source_close} becomes
\begin{equation}
S^{n+1}_{i,j} = \sigma^{n+1}_{i,j} \rho^{n+1}_{i,j},
\end{equation}
which is consistent with the discretization for the gray equation of transfer. 

The macroscopic numerical fluxes across cell interfaces are given by
 \begin{equation}
 \mathcal{F}_{i\pm\frac12,j} = \dfrac{c}{\epsilon \Delta t} \int_{t_n}^{t_{n+1} }\int_0^\infty \int_{\bbS^2} \Omega_x I(t,x_{i\pm\frac12},y_j,\bsOmega,\nu)
 \dd\bsOmega \dd \nu \dd t,
 \end{equation}
 \begin{equation}
 \mathcal{G}_{i,j\pm\frac12} = \dfrac{c}{\epsilon \Delta t} \int_{t_n}^{t_{n+1} }\int_0^\infty \int_{\bbS^2} \Omega_y I(t,x_{i},y_{j\pm\frac12},\bsOmega,\nu)
 \dd\bsOmega \dd \nu \dd t,
 \end{equation}
for which closures are also needed. In constructing a closing relationship for the macroscopic fluxes, we substitute $I$ by a second-order
Taylor expansion of equation \eqref{eq:integral_solution}, 
 \begin{equation}\label{eq:integral_solution_approx_expand}
 I(t,\bx,\bsOmega,\nu) = I^{ini} + I^{an},
 \end{equation}
 where
 \begin{equation}
 I^{ini} = \ie^{-c \sigma (t-t_n) /\epsilon^2} I\left(t_n,\bx-\dfrac{c\bsOmega}{\epsilon} (t-t_n) ,\bsOmega,\nu\right) 
 \end{equation}
 and
 \begin{equation}\label{eq:taylor_solution}
\begin{split}
I^{an} = & \int_{t_n}^t \ie^{-c \sigma(t-s)/\epsilon^2} \dfrac{c \sigma}{\epsilon^2} \left[B(\nu, T(t_{n+1},\bx))
 + \pd{B}{T}\pd{T}{t} (s - t_{n+1}) \right. \\[3mm]
& +\left. \pd{B}{T}\pd{T}{x} \left(-\dfrac{c\Omega_x}{\epsilon}(t-s)\right)+ \pd{B}{T}\pd{T}{y} \left(-\dfrac{c\Omega_y}{\epsilon}(t-s)\right)\right]\dd s.
\end{split}
\end{equation}
Direct computation shows that for interior grids,
 \begin{equation}\label{eq:flux_x}
 \mathcal{F}_{i\pm\frac12,j} = \mathcal{F}^{ini}_{i\pm\frac12,j} + \kappa^{eff}_{i\pm\frac12,j}\pd{U_r}{x}(t_{n+1},x_{i\pm\frac12},y_j),
 \end{equation}
 \begin{equation}\label{eq:flux_y}
 \mathcal{G}_{i,j\pm\frac12} = \mathcal{G}^{ini}_{i,j\pm\frac12} + \kappa^{eff}_{i,j\pm\frac12}\pd{U_r}{y}(t_{n+1},x_i,y_{j\pm\frac12}).
 \end{equation}
In equations \eqref{eq:flux_x} and \eqref{eq:flux_y}, $\mathcal{F}^{ini}_{i\pm\frac12,j}$ and $\mathcal{G}^{ini}_{i,j\pm\frac12}$ are computed during the 
particle tracking process, and their net effect satisfy
\begin{equation}\label{eq:free_transport_flux}
\begin{split}
& \dfrac{\Delta t}{\Delta x_i} \left(\mathcal{F}_{i-\frac12,j}-\mathcal{F}_{i+\frac12,j}\right) + \dfrac{\Delta t}{\Delta y_j}\left(\mathcal{G}_{i,j-\frac12}-\mathcal{G}_{i,j+\frac12}\right)  \\[3mm]
= &  
\sum\limits_{p_k\in\mathcal{P}^{n}_\mathrm{D}+\mathcal{P}^{n+1}_{\mathrm{in}}} \dfrac{w_k}{\Delta x_i \Delta y_j} \chi_{[x_i,x_{i+1}]\times [y_j,y_{j+1}]} (\bx_k(t_{n+1}))  - \sum\limits_{p_k\in\mathcal{P}^{n}_\mathrm{D}}\dfrac{w_k}{\Delta x_i \Delta y_j} \chi_{[x_i,x_{i+1}]\times[y_j,y_{j+1}]}
(\bx_k(t_n)).
\end{split}
\end{equation}
Also, the effective diffusion coefficient is 
 \begin{equation}\label{eq:keff}
 \kappa^{eff} = \int_0^\infty  \dfrac{2\epsilon^2 - c \sigma \Delta t-\ie^{-c\sigma \Delta t/\epsilon^2}(2\epsilon^2+c\sigma \Delta t)}{3 \sigma^2} \left(b(\nu,T^{n+1}) + \dfrac{T^{n+1}}{4} \pd{b}{T} \right) \dd\nu.
 \end{equation}
In equation \eqref{eq:keff}, we first approximate
\begin{equation}
T^{n+1}_{i\pm\frac12,j} = \frac12(T^{n+1}_{i,j} + T^{n+1}_{i\pm 1,j}),\quad
T^{n+1}_{i,j\pm \frac12} = \frac12(T^{n+1}_{i,j} + T^{n+1}_{i,j\pm 1}),
\end{equation}
and then $\kappa^{eff}_{i+\frac12,j}$ is approximated by taking $\sigma$ as $\sigma(\nu, x_i,y_j,T_{i+\frac12,j})$ to produce $\kappa_l$, and then taking $\sigma$ as
$\sigma(\nu, x_{i+1},y_j, T_{i+\frac12,j})$ to produce $\kappa_r$, then take $\kappa^{eff}_{i+\frac12,j} = \frac12(\kappa_l + \kappa_r)$. $\kappa^{eff}$ on other cell boundaries are 
computed analogously.
We approximate $\pd{U_r}{x} $ and $\pd{U_r}{y}$ by central differencing:
 \begin{equation}
 \pd{U_r}{x}(t_{n+1},x_{i+\frac12},y_j) = \dfrac{U^{n+1}_{r,i+1,j}-U^{n+1}_{r,i,j}}{\frac12(\Delta x_{i+1}+\Delta x_i)},
\quad
 \pd{U_r}{x}(t_{n+1},x_{i-\frac12},y_j) = \dfrac{U^{n+1}_{r,i,j}-U^{n+1}_{r,i-1,j}}{\frac12(\Delta x_{i}+\Delta x_{i-1})},
 \end{equation}
  \begin{equation}
 \pd{U_r}{y}(t_{n+1},x_{i},y_{j+\frac12}) = \dfrac{U^{n+1}_{r,i,j+1}-U^{n+1}_{r,i,j}}{\frac12(\Delta y_{j+1}+\Delta y_j)},
\quad
 \pd{U_r}{y}(t_{n+1},x_{i},y_{j-\frac12}) = \dfrac{U^{n+1}_{r,i,j}-U^{n+1}_{r,i,j-1}}{\frac12(\Delta y_{j}+\Delta y_{j-1})}.
 \end{equation}
 So, in summary, $\rho^{n+1}_{i,j}$ is updated by solving
\begin{equation}\label{eq:update_rho}
\begin{split}
& \rho^{n+1}_{i,j} + \dfrac{1}{\Delta x_i}\left( \kappa^{eff}_{i+\frac12,j}\dfrac{U^{n+1}_{r,i+1,j}-U^{n+1}_{r,i,j}}{\frac12(\Delta x_{i+1}+\Delta x_i)}
 - \kappa^{eff}_{i-\frac12,j} \dfrac{U^{n+1}_{r,i,j}-U^{n+1}_{r,i-1,j}}{\frac12(\Delta x_{i}+\Delta x_{i-1})} \right) \\[3mm]
& + \dfrac{1}{\Delta y_j} \left( \kappa^{eff}_{i,j+\frac12}\dfrac{U^{n+1}_{r,i,j+1}-U^{n+1}_{r,i,j}}{\frac12(\Delta y_{j+1}+\Delta y_j)}
 - \kappa^{eff}_{i,j-\frac12} \dfrac{U^{n+1}_{r,i,j}-U^{n+1}_{r,i,j-1}}{\frac12(\Delta y_{j}+\Delta y_{j-1})}\right)  \\[3mm]
= & \rho^n_{i,j} + \sum\limits_{p_k \in \mathcal{P}^n_{\mathrm{D}}+\mathcal{P}^{n+1}_{\mathrm{in}}} \dfrac{w_k}{\Delta x_i \Delta y_j} \chi_{[x_i,x_{i+1}] \times [y_j, y_{j+1}]} (\bx_k(t_{n+1})) \\[3mm] 
& -
 \sum\limits_{p_k \in \mathcal{P}^n_{\mathrm{D}}} \dfrac{w_k}{\Delta x_i \Delta y_j} \chi_{[x_i,x_{i+1}] \times [y_j, y_{j+1}]} (\bx_k(t_n))  
-\dfrac{c \Delta t}{\epsilon^2} (S^{n+1}_{i,j} - \sigma^{n+1}_{p,i,j} U^{n+1}_{r,i,j}). 
\end{split}
\end{equation}
It is solved by coupling with \eqref{eq:discretize_Ur}. 
In \eqref{eq:update_rho} and \eqref{eq:discretize_Ur}, the parameters $\sigma^{n+1}_{i,j}$ and $\beta^{n+1}_{i,j}$ depends 
nonlinearly on the material temperature $T$.
The system is solved by an iteration method similar to that in \cite{sun2018asymptotic} and \cite{shi2020}, where $\sigma$, $\beta$, $C_v$, 
and $\kappa^{eff}$
are lagged in each iteration, and a linear system for $\rho$ and $U_r$ is solved for each iteration step.

\subsection{Particle re-sampling}\label{sec:re-sample}
For re-sampling collision particles at the end of each time step, we employ the same approximation to \eqref{eq:integral_solution_approx}
as  \eqref{eq:integral_solution_approx_firstorder}.
The particles follow the distribution \eqref{eq:integral_solution_discrete}, therefore the weights of re-sampled particles in cell $(i,j)$ add
up to 
\begin{equation}
E^+_{i,j} \Delta x_i \Delta y_j = \rho^{n+1}_{i,j} \Delta x_i \Delta y_j-\sum\limits_{k\in\mathcal{P}^{n+1}_{\mathrm{free}}} w_k \chi_{[x_i,x_{i+1}]\times[y_j,y_{j+1}]}(\bx_k(t_{n+1})).
\end{equation}
Particles are taken to be of equal weight. We specify a reference weight
$w_{ref}$. For cell $(i,j)$, the number of re-sampled particles is
\begin{equation}\label{eq:num_par}
N_{\mathrm{r},i,j}=\left\{\begin{array}{l}
0, \quad \text{if}~E^+_{i,j}\Delta x_i \Delta y_j < w_{ref}, \\[3mm]
\Bigg\lceil \dfrac{E^+_{i,j} \Delta x_i \Delta y_j}{w_{ref}} \Bigg\rceil,\quad \text{else}.
\end{array}\right.
\end{equation}
If $N_{\mathrm{r},i,j}>0$, the weight of each re-sampled particle is assigned
$\dfrac{E^+_{i,j} \Delta x_i \Delta y_j}{N_{\mathrm{r},i,j}}$.

In re-sampling, the distribution of the weights of the re-sampled
particles in space can also be approximated by a piecewise linear reconstruction function
\begin{equation}
E^+(x,y) \approx E^+_{i,j} + s_1 (x-x_i) + s_2 (y - y_j),
\end{equation}
where
\begin{equation}
s_1 = \text{minmod} \left(\dfrac{E^+_{i+1,j}-E^+_{i,j}}{\frac12 (\Delta x_i + \Delta x_{i+1})}, \dfrac{E^+_{i,j}-E^+_{i-1,j}}{\frac12(\Delta x_{i-1}+\Delta x_i)}\right),
\end{equation}
and 
\begin{equation}
s_2 = \text{minmod} \left(\dfrac{E^+_{i,j+1}-E^+_{i,j}}{\frac12 (\Delta y_j + \Delta y_{j+1})}, \dfrac{E^+_{i,j}-E^+_{i,j-1}}{\frac12(\Delta y_{j-1}+\Delta y_j)}\right),
\end{equation}
with the minmod function defined as
\begin{equation}
\text{minmod}(a,b) = \left\{\begin{array}{l}
\text{sign}(a) \text{min}(|a|,|b|),\quad \text{if}~ \text{sign}(a) = \text{sign}(b),\\
0,\quad \text{else}.
\end{array}\right.
\end{equation}
It was proved in \cite{liu2019} that chosing either way of re-sampling 
particle weights does not affect the asymptotic preserving properties of the 
UGKP method. In our numerical simulations, for simplicity, we employ the approximation without reconstruction for the distribution of particle weights in space. 
Furthermore, the re-sampled particles follow an isotropic distribution for the angular variable $\bsOmega$, while their frequency variable follows the distribution
\begin{equation}
 \dfrac{1-\ie^{-c \sigma(\nu,T^{n+1}) \Delta t/\epsilon^2}}{1 - \int_0^\infty \ie^{-c \sigma(\nu,T^{n+1}) \Delta  t/\epsilon^2} b(\nu,T^{n+1})\dd\nu} b(\nu,T^{n+1}).
\end{equation}

In summary, the UGKP algorithm is outlined as follows. Initially, particles are sampled from the given initial condition. Later, for each time step,
the UGKP method consists of the following steps:
\begin{enumerate}
\item Follow the trajectories of each particle before the first collision, and compute the net free transport flux by \eqref{eq:free_transport_flux} for each cell. 
\item Update the macroscopic quantities $\rho^{n+1}$ and $U_r^{n+1}$ solving \eqref{eq:update_rho} and \eqref{eq:discretize_Ur} by iteration. 
Use the closing relationships
\eqref{eq:source_close} for the source term. For each iteration step, after $U_r$ is updated, update $T$ by the relationship \eqref{eq:Ur}.
\item Re-sample the emission particles by \eqref{eq:integral_solution_approx_firstorder}.
\item Sample boundary particles by their respective distribution.
\end{enumerate}

\section{Numerical analysis}
In this section, we perform formal analysis of the asymptotic preserving property and regime adaptive property of the proposed UGKP method, which states that the UGKP method
converges to a five-point central difference scheme for the Rosseland diffusion equation in the 
optically thick regime, and coincides with a Monte Carlo
method in the optically thin regime.
%
\begin{proposition}
For $\epsilon \rightarrow 0$, then the leading order of the specific intensity $I$ of the UGKP method is a Planck function at local 
material temperature,
\begin{equation}
I^{(0)}_{i,j} = B(\nu, T^{n+1}_{i,j}),
\end{equation}
with $T^{n+1}_{i,j}$ satisfying a five-point discretization of the nonlinear diffusion equation
\begin{equation}\label{eq:nonlinear_diff}
C_v \pd{T}{t} + a \pd{}{t} (T^4) = \nabla \cdot \dfrac{a c}{3 \sigma_R} \nabla (T^4).
\end{equation}
\end{proposition}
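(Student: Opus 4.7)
The plan is to take $\epsilon \to 0$ in the coupled update equations \eqref{eq:update_rho} and \eqref{eq:discretize_Ur} (together with the closure \eqref{eq:integral_solution_discrete} and source discretization \eqref{eq:source_close}) and show that each piece collapses to a backward-Euler, five-point finite-volume discretization of \eqref{eq:nonlinear_diff}. The argument splits naturally into four steps: (i) the collisional limit of the particle tracking, (ii) the Planckian limit of the specific intensity, (iii) the limit of the effective diffusion coefficient $\kappa^{eff}$, and (iv) the algebraic reduction of the coupled semi-discrete system to the nonlinear diffusion equation.

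First I would observe that as $\epsilon \to 0$, the free-streaming probability in a time step obeys $e^{-c\sigma \Delta t/\epsilon^2} \to 0$ pointwise in $\nu$, and the collisional sampling distance $d_{\mathrm{COL}} = (\epsilon^2/\sigma)|\ln\xi|$ tends to zero. Hence every tracked particle is absorbed within the step, so $\mathcal{P}^{n+1}_{\mathrm{free}} \to \emptyset$. Consequently both the free-transport flux \eqref{eq:free_transport_flux} and the first ``surviving'' sum on the right of \eqref{eq:update_rho} vanish, $\mathcal{F}^{ini}_{i\pm 1/2,j}, \mathcal{G}^{ini}_{i,j\pm 1/2} \to 0$, and $E^+_{i,j} \to \rho^{n+1}_{i,j}/(4\pi)$ in the closure.

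Next I would plug this into \eqref{eq:integral_solution_discrete}. The leading term in the exponential vanishes, while the prefactor of the emission term behaves as $(1 - e^{-c\sigma\Delta t/\epsilon^2})/(1-\int e^{-c\sigma\Delta t/\epsilon^2} b\,\dd\nu) \to 1$. Therefore $I^{n+1}_{i,j} \to (\rho^{n+1}_{i,j}/(4\pi))\,b(\nu, T^{n+1}_{i,j})$. Taking the limit in \eqref{eq:discretize_Ur} gives $S^{n+1}_{i,j} = \sigma^{n+1}_{p,i,j} U^{n+1}_{r,i,j}$; substituting $I^{(0)} = (\rho^{n+1}/(4\pi)) b$ into the definition \eqref{eq:source_define} and using \eqref{eq:sigmaP} yields $S^{n+1}_{i,j} = \sigma_p \rho^{n+1}_{i,j}$. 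Comparing the two expressions forces $\rho^{n+1}_{i,j} = U_r^{n+1} = a c (T^{n+1}_{i,j})^4$, and then by \eqref{eq:normalized_planck} we get the Planckian $I^{(0)}_{i,j} = B(\nu, T^{n+1}_{i,j})$.

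The step I expect to be the main obstacle is identifying the limit of $\kappa^{eff}$ in \eqref{eq:keff} with $-c\Delta t/(3\sigma_R)$. The bracket $2\epsilon^2 - c\sigma\Delta t - e^{-c\sigma\Delta t/\epsilon^2}(2\epsilon^2 + c\sigma\Delta t)$ tends to $-c\sigma\Delta t$, so
\begin{equation*}
\kappa^{eff} \to -\frac{c\Delta t}{3}\int_0^\infty \frac{1}{\sigma}\!\left(b + \frac{T}{4}\frac{\partial b}{\partial T}\right)\!\dd\nu.
\end{equation*}
The integrand must then be rewritten in terms of $\partial B/\partial T$. Using $B = U_r b/(4\pi)$ with $U_r = acT^4$, a direct computation gives $b + (T/4)\,\partial_T b = \pi (\partial_T B)/(acT^3)$, and since $\int_0^\infty \partial_T B\,\dd\nu = acT^3/\pi$, the definition \eqref{eq:sigmaR} of the Rosseland mean yields $\int_0^\infty \sigma^{-1}(b + (T/4)\partial_T b)\,\dd\nu = 1/\sigma_R$. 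Hence $\kappa^{eff} \to -c\Delta t/(3\sigma_R)$.

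Finally I would combine everything. Eliminating $S^{n+1}-\sigma_p U_r^{n+1}$ between \eqref{eq:discretize_rho} and \eqref{eq:discretize_Ur} (equivalently using the un-multiplied temperature equation $S - \sigma_p U_r = \epsilon^2 C_v(T^{n+1}-T^n)/\Delta t$) kills the stiff $c/\epsilon^2$ factor and leaves, in the limit,
\begin{equation*}
\frac{U_r^{n+1}_{i,j}-U_r^{n}_{i,j}}{\Delta t} + C_v \frac{T^{n+1}_{i,j}-T^n_{i,j}}{\Delta t} = \nabla_h \cdot \!\left(\frac{ac}{3\sigma_R}\nabla_h (T^{n+1})^4\right)_{i,j},
\end{equation*}
where $\nabla_h \cdot \nabla_h$ is exactly the five-point central difference operator induced by the central differences used for $\partial U_r/\partial x$ and $\partial U_r/\partial y$. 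With $U_r = aT^4$ (absorbing the factor $c$ consistently) this is precisely a backward-Euler five-point discretization of \eqref{eq:nonlinear_diff}, completing the proof.
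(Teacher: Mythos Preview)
Your argument is correct and mirrors the paper's proof closely: vanishing of the free-streaming contribution as $\epsilon\to 0$, identification $\rho^{(0)}=U_r$ from two expressions for $S$, the limit of $\kappa^{eff}$, and elimination of the stiff source by combining \eqref{eq:discretize_rho} with \eqref{eq:discretize_Ur}. Two small differences worth noting: you obtain $S=\sigma_p U_r$ from \eqref{eq:discretize_Ur} whereas the paper gets it by order-matching in \eqref{eq:update_rho} (both are valid), and you supply the identity $b+(T/4)\partial_T b=\pi(\partial_T B)/(acT^3)$ that the paper skips when asserting the limit of $\kappa^{eff}$---your sign $\kappa^{eff}\to -c\Delta t/(3\sigma_R)$ is in fact the one consistent with \eqref{eq:keff}, and the final diffusion equation comes out with the correct sign either way.
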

\begin{proof}
As $\epsilon \rightarrow 0$, we have
\begin{equation}
\ie^{-c\sigma(\nu,T^n)\Delta t /\epsilon^2} I(t_n,x-\frac{c\Omega_x}{\epsilon}\Delta t, y - \frac{c \Omega_y}{\epsilon}\Delta t, \bsOmega,\nu)  
\rightarrow 0.
\end{equation}
and the number of particles $N_p$ also converges to zero as
$\epsilon \rightarrow 0$. Therefore, from equation \eqref{eq:integral_solution_discrete}, 
\begin{equation}
\lim_{\epsilon\rightarrow 0} I^{n+1}_{i,j} =  \lim_{\epsilon \rightarrow 0} \dfrac{1-\ie^{-c \sigma(\nu,T^{n+1}_{i,j}) \Delta t/\epsilon^2}}{1 - \int_0^\infty \ie^{-c \sigma(\nu,T^{n+1}_{i,j}) \Delta  t/\epsilon^2} b(\nu,T^{n+1}_{i,j})\dd\nu} \\[3mm]
\times \dfrac{1}{4\pi}\rho^{n+1}_{i,j} b(\nu,T^{n+1}_{i,j}) = \dfrac{\rho^{n+1}_{i,j}}{4\pi} b(\nu, T^{n+1}_{i,j}).
\end{equation}
From equation \eqref{eq:update_rho}, by matching the orders of $\epsilon$,
\begin{equation}
\lim_{\epsilon \rightarrow 0} S^{n+1}_{i,j} = \sigma^{n+1}_{p,i,j} U^{n+1}_{r,i,j}.
\end{equation}
On the other hand, Eqaution \eqref{eq:source_close} implies that
\begin{equation}
\lim_{\epsilon\rightarrow 0} S^{n+1}_{i,j} = \sigma^{n+1}_{p,i,j} \rho^{n+1}_{i,j}.
\end{equation}
Therefore, in the limit $\epsilon \rightarrow 0$, we have that the leading order of the energy density, $\rho^{n+1,(0)}_{i,j}$, satisfies
\begin{equation}\label{eq:rho_lim}
\rho^{n+1,(0)}_{i,j} = U^{n+1}_{r,i,j}.
\end{equation}
Equation \eqref{eq:rho_lim} implies
\begin{equation}
I^{n+1,(0)}_{i,j} = B(\nu, T^{n+1}_{i,j}).
\end{equation}
Also, from equation \eqref{eq:keff}, we have 
\begin{equation}
\lim_{\epsilon\rightarrow 0 }\kappa^{eff}  = \dfrac{c\Delta t}{3\sigma_R},
\end{equation}
with $\sigma_R$ the Rosseland mean cross-section defined in \eqref{eq:sigmaR}.
Therefore, for $\epsilon \rightarrow 0$, equation \eqref{eq:update_rho} approaches
\begin{equation}\label{eq:update_rho_lim}
\begin{split}
& \dfrac{\rho^{n+1}_{i,j}-\rho^n_{i,j}}{c \Delta t} 
+ \dfrac{1}{3\Delta x_i}\left( \dfrac{1}{\sigma_{R,i+\frac12,j}}\dfrac{U^{n+1}_{r,i+1,j}-U^{n+1}_{r,i,j}}{\frac12(\Delta x_{i+1}+\Delta x_i)}
 - \dfrac{1}{\sigma_{R,i-\frac12,j}}\dfrac{U^{n+1}_{r,i,j}-U^{n+1}_{r,i-1,j}}{\frac12(\Delta x_{i}+\Delta x_{i-1})} \right) \\[3mm]
& + \dfrac{1}{3 \Delta y_j} \left(\dfrac{1}{\sigma_{R,i,j+\frac12}} \dfrac{U^{n+1}_{r,i,j+1}-U^{n+1}_{r,i,j}}{\frac12(\Delta y_{j+1}+\Delta y_j)}
 - \dfrac{1}{\sigma_{R,i,j-\frac12} }\dfrac{U^{n+1}_{r,i,j}-U^{n+1}_{r,i,j-1}}{\frac12(\Delta y_{j}+\Delta y_{j-1})}\right)  \\[3mm]
= & -\dfrac{1}{\epsilon^2} (S^{n+1}_{i,j} - \sigma^{n+1}_{p,i,j} U^{n+1}_{r,i,j}). 
\end{split}
\end{equation}
Dividing \eqref{eq:discretize_Ur} on both sides by $\epsilon^2 \beta^{n+1}_{i,j}$, we have
\begin{equation}\label{eq:update_Ur_lim}
C^{n+1}_{v,i,j} \dfrac{1}{\beta^{n+1}_{i,j}} \dfrac{U^{n+1}_{r,i,j}-U^n_{r,i,j}}{\Delta t} =\dfrac1{\epsilon^2} \left(S^{n+1}_{i,j}-\sigma^{n+1}_{p,i,j} U^{n+1}_{r,i,j}\right).
\end{equation}
Adding \eqref{eq:update_rho_lim} and \eqref{eq:update_Ur_lim}, and making use of the relationships \eqref{eq:rho_lim} and 
\begin{equation}
\pd{U_r}{T} = 4acT^3 = \beta,
\end{equation}
we have that $T^{n+1}_{i,j}$ satisfy a five-point discretization of the nonlinear diffusion equation \eqref{eq:nonlinear_diff}.
\end{proof}

\begin{proposition}
In the optically thin regime as $\sigma \rightarrow 0$, the UGKP method is consistent to the Monte Carlo method for the collisionless radiative transfer 
equation.
\end{proposition}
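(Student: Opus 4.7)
The plan is to trace each ingredient of the UGKP algorithm through the limit $\sigma \to 0$ and verify that it collapses to the standard Monte Carlo update for the collisionless equation $\frac{\epsilon^2}{c}\partial_t I + \epsilon\bsOmega\cdot\nabla I = 0$ together with a stationary material temperature. The scheme has three coupled pieces---particle tracking, the macroscopic $(\rho, U_r)$ solve, and re-sampling---so I would examine each in turn.

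First I would address the particle tracking. From the collision distance formula \eqref{eq:col_dis}, $d_{\mathrm{COL}} = (\epsilon^2/\sigma)|\ln\xi| \to \infty$ almost surely as $\sigma \to 0$, so for any fixed $\Delta t$ every particle has $d_{\mathrm{COL}} > d_{\mathrm{CEN}}$ and $d_{\mathrm{COL}} > d_{\mathrm{B}}$. Consequently $\mathcal{P}^n_{\mathrm{C}} = \emptyset$ and $\mathcal{P}^{n+1}_{\mathrm{free}} = \mathcal{P}^n_{\mathrm{D}} + \mathcal{P}^{n+1}_{\mathrm{in}} - \mathcal{P}^{n+1}_{\mathrm{out}}$: every surviving particle moves ballistically. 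The free transport flux identity \eqref{eq:free_transport_flux} is therefore exactly the standard Monte Carlo tally of net particle flow through each cell.

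Next I would show that the macroscopic update \eqref{eq:update_rho}--\eqref{eq:discretize_Ur} degenerates to the correct collisionless form. The source closure \eqref{eq:source_close} has a prefactor $\sigma(\nu,T)$ multiplying the free-particle sum and a prefactor proportional to $\int \sigma(1-e^{-c\sigma\Delta t/\epsilon^2})b\,\dd\nu$ multiplying the resampled contribution; both vanish as $\sigma\to 0$, so $S^{n+1}_{i,j}\to 0$. Likewise $\sigma^{n+1}_{p,i,j}\to 0$ by \eqref{eq:sigmaP}. For the effective diffusion coefficient, I would Taylor-expand the integrand in \eqref{eq:keff}: writing $x = c\sigma\Delta t/\epsilon^2$, the numerator $2\epsilon^2(1-e^{-x}) - c\sigma\Delta t(1+e^{-x})$ has leading behavior $-\epsilon^2 x^3/6 = -(c\sigma\Delta t)^3/(6\epsilon^4)$, so dividing by $3\sigma^2$ gives $\kappa^{eff} = O(\sigma)\to 0$. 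Hence \eqref{eq:update_rho} reduces to $\rho^{n+1}_{i,j} - \rho^n_{i,j}$ equal to the net free-transport tally, which is precisely the Monte Carlo density update; similarly \eqref{eq:discretize_Ur} yields $U^{n+1}_{r,i,j} = U^n_{r,i,j}$, so the material temperature is frozen, as required for the collisionless regime.

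Finally I would confirm the re-sampling step is trivial in the limit. From the emission density $E^+_{i,j}\Delta x_i\Delta y_j = \rho^{n+1}_{i,j}\Delta x_i\Delta y_j - \sum_{p_k \in \mathcal{P}^{n+1}_{\mathrm{free}}} w_k\chi$, the consistency established above gives $E^+_{i,j}\to 0$, so by \eqref{eq:num_par} the count $N_{\mathrm{r},i,j}$ is zero for $\sigma$ sufficiently small and no emission particles are created. Putting the three pieces together, the scheme reduces to: free-stream every particle across $[t_n,t_{n+1}]$, tally $\rho$ directly from particle positions, hold $T$ fixed, and skip re-sampling---which is exactly the standard Monte Carlo algorithm for the collisionless radiative transfer equation. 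The only delicate step is the $\kappa^{eff}$ expansion, since the naive $1/\sigma^2$ factor could suggest a singularity; the cubic cancellation in the numerator is what I expect to require the most care.
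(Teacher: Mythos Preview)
Your proposal is correct and follows the same approach as the paper: show that $\kappa^{eff}\to 0$ and that the source terms vanish so that \eqref{eq:update_rho} collapses to the free-transport tally, while the specific intensity evolves by pure streaming. Your argument is in fact more complete than the paper's very terse proof, which asserts $\kappa^{eff}\to 0$ by ``direct computation'' and writes down the reduced update without explicitly verifying that $S^{n+1}_{i,j}\to 0$, that $U_r$ is frozen, or that re-sampling becomes trivial; your treatment of each of these, including the cubic cancellation in the $\kappa^{eff}$ expansion, fills those gaps.
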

\begin{proof}
Direct computation shows when taking the limit $\sigma \rightarrow 0$, we have $\kappa^{eff} \rightarrow 0$. Therefore, \eqref{eq:update_rho} becomes
\begin{equation}\label{eq:update_rho_discrete}
\begin{split}
 \rho^{n+1}_{i,j} 
=  \rho^n_{i,j}  & + \sum\limits_{p_k\in\mathcal{P}^n_{\mathrm{D}}+\mathcal{P}^{n+1}_{\mathrm{in}}} \dfrac{w_k}{\Delta x_i \Delta y_j} \chi_{[x_i,x_{i+1}] \times [y_j, y_{j+1}]} (\bx^k(t_{n+1}))  \\[3mm]
 & -
 \sum\limits_{p_k\in\mathcal{P}^n_{\mathrm{D}}} \dfrac{w_k}{\Delta x_i \Delta y_j} \chi_{[x_i,x_{i+1}] \times [y_j, y_{j+1}]} (\bx_k(t_n)). 
 \end{split}
\end{equation}
At the same time, the specific intensity evolves by
\begin{equation}
I(t,\bx,\bsOmega,\nu) =  I\left(0,\bx-\dfrac{c\bsOmega}{\epsilon} t ,\bsOmega,\nu\right).
\end{equation}
Therefore, the UGKP method is consistent with the Monte Carlo solver for this case.
\end{proof}

\begin{proposition}
The computational complexity of the UGKP method adapts to the flow regime. 
In the optically thick regime as $\sigma \rightarrow \infty$, the computational complexity of the UGKP method is consistent with the diffusive scheme.
In the optically thin regime as $\sigma \rightarrow 0$, the computational complexity of the UGKP method is consistent with the Monte Carlo scheme.
\end{proposition}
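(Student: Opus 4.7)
The plan is to analyze the two limiting regimes separately, tracking the per-particle work, the total particle count, and the cost of the macroscopic linear solve in each case, then matching the dominant terms against the cost of a standard implicit diffusion solver and a standard collisionless Monte Carlo solver respectively.

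In the optically thick limit $\sigma \to \infty$, I would first invoke equation \eqref{eq:col_dis} to note that the collision distance $d_{\mathrm{COL}} = (\epsilon^2/\sigma)|\ln\xi| \to 0$, so every tracked particle terminates after a single free-flight segment of vanishing length, giving $O(1)$ work per particle. Next, I would bound the number of re-sampled particles per cell using \eqref{eq:num_par}: the count is at most $\lceil E^+_{i,j}\Delta x_i \Delta y_j / w_{ref}\rceil$, and the first proposition above bounds $E^+_{i,j}$ uniformly in $\sigma$ via the diffusion limit $\rho^{n+1}_{i,j} \to U^{n+1}_{r,i,j}$. Hence the total tracking work per step scales like the number of cells with a prefactor independent of $\sigma$. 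I would then argue that the dominant cost is the iterative solution of the coupled macroscopic system \eqref{eq:update_rho}--\eqref{eq:discretize_Ur}, which by the first proposition reduces precisely to the five-point central discretization of the nonlinear diffusion equation \eqref{eq:nonlinear_diff}, matching the cost of an implicit diffusion solver cell by cell.

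In the optically thin limit $\sigma \to 0$, I would use \eqref{eq:col_dis} again to see that $d_{\mathrm{COL}} \to \infty$, so almost surely no particle is absorbed within the step, i.e.\ $\mathcal{P}^{n+1}_{\mathrm{C}} = \emptyset$. Consequently the re-sampled set $\mathcal{P}^{n+1}_{\mathrm{r}}$ is also empty and the re-sampling stage incurs no cost. From the argument of the second proposition above, $\kappa^{eff} \to 0$ and the source balance $S^{n+1}_{i,j} - \sigma^{n+1}_{p,i,j} U^{n+1}_{r,i,j}$ vanishes, so the macroscopic update \eqref{eq:update_rho} collapses to the explicit tally \eqref{eq:update_rho_discrete}, requiring only a linear pass over particles to accumulate contributions to $\rho$. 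What remains is pure free-streaming advection, tracked with travel distance $\min\{d_{\mathrm{B}},d_{\mathrm{CEN}}\}$ per cycle, which is exactly the workflow of a collisionless Monte Carlo solver.

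The main obstacle will be rigorously bounding the number of re-sampled particles in the thick limit so that the particle-tracking cost does not overwhelm the diffusion-solver cost. I would handle this by combining the a priori bound on the radiation energy density coming from Proposition 1 with the ceiling rule \eqref{eq:num_par}, which guarantees a uniformly bounded resampled count per cell as $\sigma\to\infty$. A secondary subtlety is that \textit{consistent with} in the proposition is an asymptotic statement about leading-order cost scaling rather than a literal identity; I would make this precise by comparing work per time step and per cell between the UGKP method and the reference schemes, excluding set-up costs that are identical in all three schemes.
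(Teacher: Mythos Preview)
Your plan is correct and follows the same overall decomposition as the paper: split the per-step cost into a macroscopic diffusion-solve part and a microscopic particle-tracking part, then examine each in the two limiting regimes.

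The paper's argument is shorter and slightly different in detail. It estimates the particle-tracking cost directly as $N_{\mathrm{MC}}\sim \min\{N_p c\nu^{-1}\Delta x^{-1},\,N_p c\Delta t\Delta x^{-1}\}$ with $\nu=c\sigma\epsilon^{-2}$, i.e.\ number of particles times expected number of cell crossings before the first collision or census. In the thick limit this expression tends to zero, leaving only $N_{\mathrm{DIFF}}$; in the thin limit it becomes $N_p c\Delta t\Delta x^{-1}$ and the paper simply invokes ``$N_p$ large'' to conclude Monte Carlo cost dominates, without arguing that the macroscopic solve simplifies. Your route is more careful on two points the paper glosses over: you bound the total particle count in the thick limit via \eqref{eq:num_par} and the a priori energy bound (so that the $O(1)$ per-particle overhead the paper ignores is genuinely controlled), and in the thin limit you observe that $\kappa^{eff}\to 0$ collapses the macroscopic update to the explicit tally \eqref{eq:update_rho_discrete}, so the diffusion solve itself becomes cheap rather than merely subdominant. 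What the paper's cell-crossing estimate buys is a single compact formula valid in both regimes; what your argument buys is an explicit accounting of the re-sampling and macroscopic-solve costs that the paper leaves implicit.
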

\begin{proof}
In the UGKP method, the computational cost comprises of the macroscopic computational cost of solving the diffusion system $N_{\text{DIFF}}$ and the microscopic computational cost of particle tracking $N_{\text{MC}}$.
Since the particles are tracked until their first collision event or until
the end of the time step, the tracking time of each particle is $\text{min}\{\nu^{-1},\Delta t\}$, where $\nu=c\sigma\epsilon^{-2}$ is the collision rate.
Assume that the numerical cell size is $\Delta x$, the total computational cost of particle tracking within one time step is
\begin{equation}
N_{\text{MC}}\sim \text{min}\{N_pc\nu^{-1}\Delta x^{-1},N_pc\Delta t\Delta x^{-1}\},
\end{equation}
where $N_p$ is the total number of particles.
In the optically thick regime, the UGKP's computational cost within one 
time step follows
\begin{equation}
\begin{aligned}
\lim_{\sigma \to \infty}(N_{\text{DIFF}}+N_{\text{MC}})=&N_{\text{DIFF}}+\lim_{\nu \to \infty}N_{\text{MC}}\\
\sim&N_{\text{DIFF}},
\end{aligned}
\end{equation}
which shows the UGKP's computational cost is similar to the diffusive scheme in the optically thick regime.
In the optically thin regime, the UGKP's computational cost within one time step follows
\begin{equation}
\begin{aligned}
\lim_{\sigma \to 0}(N_{\text{DIFF}}+N_{\text{MC}})=&N_{\text{DIFF}}+\lim_{\nu \to 0}N_{\text{MC}}\\
=&N_{\text{DIFF}}+N_pc\Delta t\Delta x^{-1}.
\end{aligned}
\end{equation}
Considering the particle number $N_p$ is large, the UGKP's computational cost is similar to the Monte Carlo method in the optically thin regime.
\end{proof}
\section{Numerical results}

In this section, we present six numerical examples to validate the capability and efficiency of the proposed frequency-dependent UGKP method.
The units of all our simulation are as follow: the unit of length is taken to be centimeter (cm), the unit of time is nanosecond (ns), the unit of 
temperature is kilo electron-volt (keV), and the unit of energy is $10^9$ Joules (GJ). Under the above units, the speed of light is $29.98 \mathrm{cm/ns}$, 
and the radiation constant is $0.01372 \mathrm{GJ/cm^3/keV^4}$.  In all our examples we take $\epsilon = 1$.
All computations are performed for a sequential code on a computer with 
a CPU frequency of 2.2 GHz.

\subsection{Marshak wave problem for the gray equation of transfer}
In this example, we test the classical Marshak wave problem, which describes the propagation 
of a thermal
wave in a slab. The slab is initially cold, and is heated by a constant source incident on its boundary. This benchmark problem is also studied in \cite{Larsen2013,
sun2015asymptotic1,steinberg2022multi}. We define $T_{\mathrm{keV}} = 1 \mathrm{keV}$, and take the absorption/emission coefficient to be 
$\sigma(\nu,T) = \dfrac{300}{(T/T_{\mathrm{keV}})^3} \text{cm}^{-1}$. The specific heat capacity is set to be $0.3 \text{GJ}/\text{cm}^{3}/\text{keV}$. 
The initial material temperature $T$ is $10^{-3}
\text{keV}$. Initially, material and radiation energy are at equilibrium. The specific intensity on the left boundary is kept at a constant distribution. The distribution on the left boundary is isotropic in angular variable, and is a Planckian distribution associated with a temperature of  $1 \mathrm{keV}$ concerning
the frequency variable. 
The computation
domain is $[0,\infty)$ but taken to be $L=[0,0.5]$ in the simulations. The UGKP method uses $200$ uniform cells in space and takes a time step of 
$\Delta t  = 1.6\times 10^{-3} \mathrm{ns} $. 
\begin{figure}[htbp]
  \centering 
  \includegraphics[width=0.5\textwidth,
  trim={15mm 90mm 20mm 90mm}, clip]{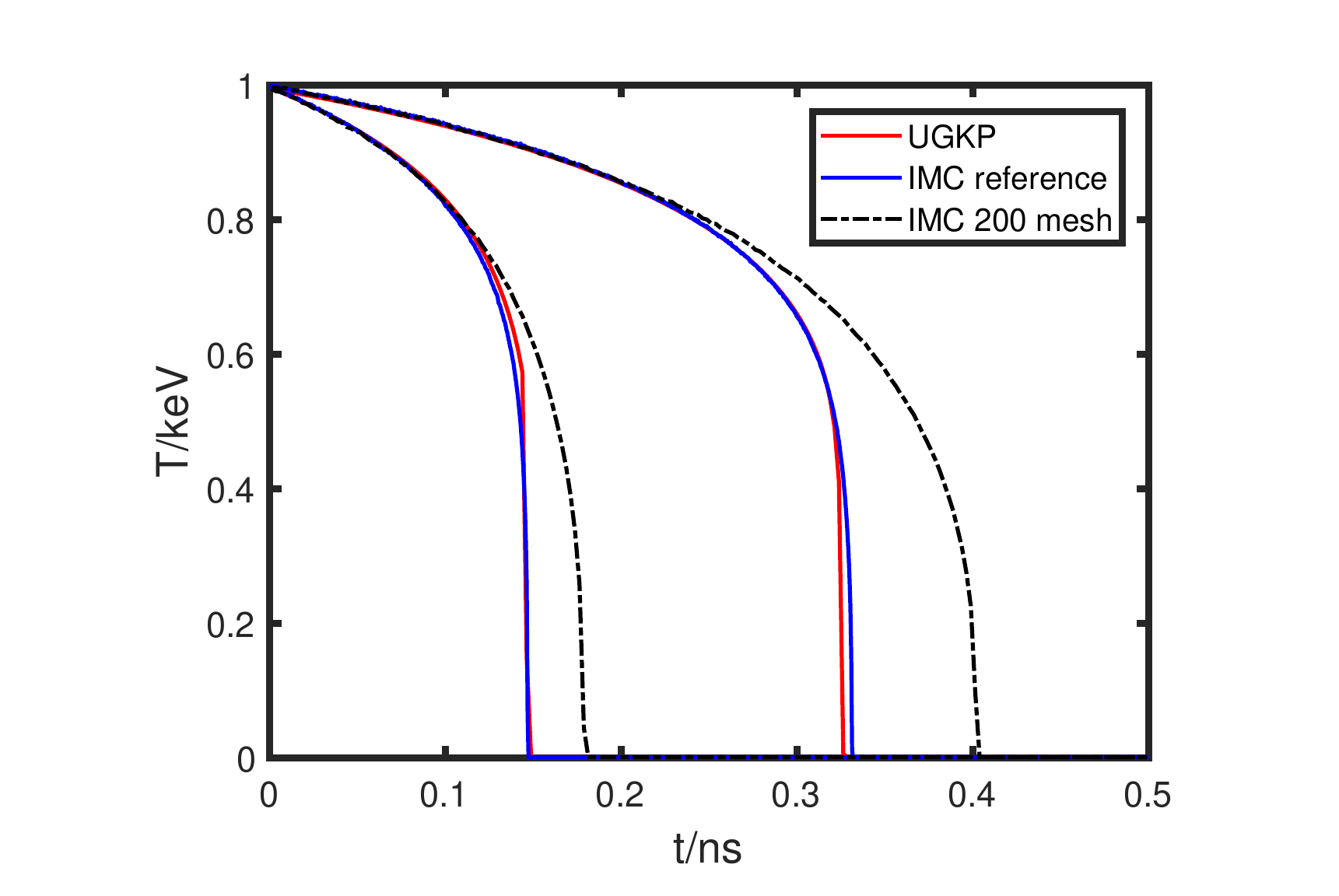}
  \caption{The material temperature $T$ of the Marshak Wave problem for the gray equation of transfer 
  at time $t = 10 \mathrm{ns}$ and $50 \mathrm{ns}$.}
  \label{fig:marshak300}
\end{figure}

In Figure \ref{fig:marshak300}, the numerical results of the material temperature at time $t = 10 \mathrm{ns}$ and $50 \mathrm{ns}$ are plotted. 
The reference solutions are obtained by the 
implicit Monte Carlo method using $1000$ uniform cells in space. The wavefronts of the UGKP method are 
consistent with those 
of the reference solution. Figure \ref{fig:marshak300} also presents the results obtained by the implicit Monte Carlo method
using $200$ uniform spatial cells, and it could be seen that for this mesh size, the wavefront of the implicit Monte Carlo method travels 
faster than the reference solution due to teleportation error. However, the UGKP method does not suffer 
from the teleportation error because of its asymptotic preserving properties. Therefore, it can
use a much larger cell size and time step than the particle mean free path and collision time.
For the computation time, the IMC reference solution takes $186$ hours 
to reach the simulation time of 
$50 \mathrm{ns}$, while the UGKP method takes $8.5$ hours, which demonstrates the UGKP method is more efficient than IMC for this example. 
The improved efficiency of the UGKP method is because it does not need to compute the large amount 
of effective scattering employed by the IMC method for this optically thick scenario.


\subsection{Frequency-dependent homogeneous problem}
The second example we consider is a frequency-dependent homogeneous problem. Initially, radiation and material temperature are equal at 
$1 \mathrm{keV}$, but the specific intensity is not a Planck distribution for frequency. Instead,
the initial specific intensity is 
\begin{equation}
I_0(\bx,\bsOmega,\nu) = \dfrac{a c T_r^4}{4\pi} b(\nu, T_0),
\end{equation}
with $T_0 = 0.1 \mathrm{keV}$ and $T_r = 1 \mathrm{keV}$, and $b(\nu,T_0)$ is the normalized Planck distribution as 
defined in \eqref{eq:normalized_planck}. The computation domain is $[0,0.01] \times [0,1]$ with periodic boundary conditions
imposed on all boundaries. For this setup, the solution remains constant in space during any arbitrary 
simulation period, but there is energy exchange between radiation and background material. 
We take $\sigma$ to be 
\begin{equation}
\sigma(\nu,T) = \left\{\begin{array}{l}
10^{-8} \mathrm{cm}^{-1}, \quad \text{if}~h \nu \in (0,1),\\[3mm]
1000 \mathrm{cm}^{-1}, \quad\text{else},
\end{array}
\right.
\end{equation}
and the specific heat capacity is set as $0.3 \text{GJ}/\text{cm}^{3}/\text{keV}$. Because the absorption
coefficient $\sigma$ is almost zero for the low-frequency range, there is little energy exchange between 
radiation and matter for this range, and the specific intensity is expected to remain close to the 
initial distribution. For the rest of the frequency range, $\sigma$ is very large, therefore one would expect
the final specific intensity to be close to the equilibrium distribution, which is a Planck function 
associated with the final material temperature. So, we expect the distribution of the specific intensity 
concerning the frequency variable to evolve into two peaks. This distribution of the specific intensity is
a phenomenon unique to the multi-frequency setup, and cannot be captured in the gray approximation to the 
equation of transfer.
In our simulations, a uniform time step of $2.6\times10^{-4}\mathrm{ns}$ is taken and one mesh grid 
in space is used.
\begin{figure}[htbp]
  \centering 
  \includegraphics[width=0.5\textwidth,
  trim={20mm 80mm 20mm 80mm}, clip]{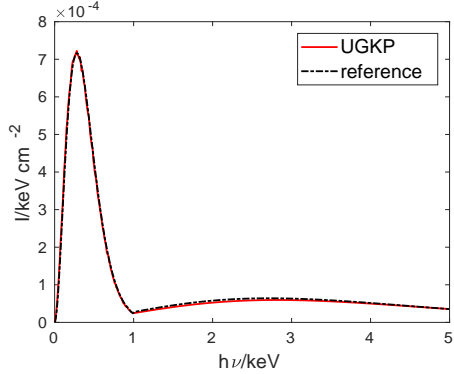}
  \caption{The specific intensity $I$ of the frequency-dependent homogeneous problem at time $t = 1 \mathrm{ns}$.}
  \label{fig:homogeneous}
\end{figure}

In Figure \ref{fig:homogeneous}, the solution of the specific intensity at $t = 1 \mathrm{ns}$
with respect to the frequency variable is plotted. The UGKP solution is compared
with a reference solution which directly computes the homogeneous problem using multigroup discretization of 
the frequency variable with $10000$ groups. 
The UGKP solution and the reference solution are almost identical, 
showing the capability of the UGKP method to accurately compute 
frequency-dependent energy exchange. The specific intensity displays two peaks. The left peak is determined by the initial distribution, and the right peak is determined by the final material temperature.
The UGKP solution agrees well with our intuition, further verifying the accuracy of 
this method.

\subsection{Frequency-dependent Marshak wave problem}
In the third example taken from \cite{sun2015asymptotic2}, we consider another Marshak wave problem, but where the opacity depends on frequency. We take the absorption/emission coefficient to be of the form
\begin{equation}
\sigma(\nu,T) = \dfrac{1000}{(h\nu)^3 \sqrt{kT}},
\end{equation}
and set the specific heat capacity to be constant at $0.1 \mathrm{GJ}/\mathrm{keV}/\mathrm{cm}^3$. Initially, the material temperature is $T_0 = 10^{-3} \mathrm{keV}$, and the 
specific intensity is
\begin{equation}
I_0(\bx,\bsOmega,\nu) = B(\nu, T_0).
\end{equation}
The left boundary is kept at a constant specific intensity. Its distribution 
with respect to the angular
variable is isotropic. With respect to the frequency variable, the specific intensity
is 
a Planckian associated with a temperature of $1 \mathrm{keV}$. A reflective boundary condition is imposed
on the right. This example tests an algorithm's capability to handle cases where the absorption coefficient varies 
widely for the whole frequency range. For low frequency where $ h \nu$ is small, $\sigma$ can be very 
large, therefore the system is optically thick. On the other hand, $\sigma$ drops quickly as $h\nu$ increases,
and the system becomes optically thin
for $h \nu$ surpassing $10 \mathrm{keV}$. This multiscale property makes 
this example a challenging simulation problem.

\begin{figure}[htbp]
  \centering 
  \subfloat[Radiation temperature.]{
  \includegraphics[width=0.48\textwidth,
  trim={15mm 80mm 20mm 80mm}, clip]{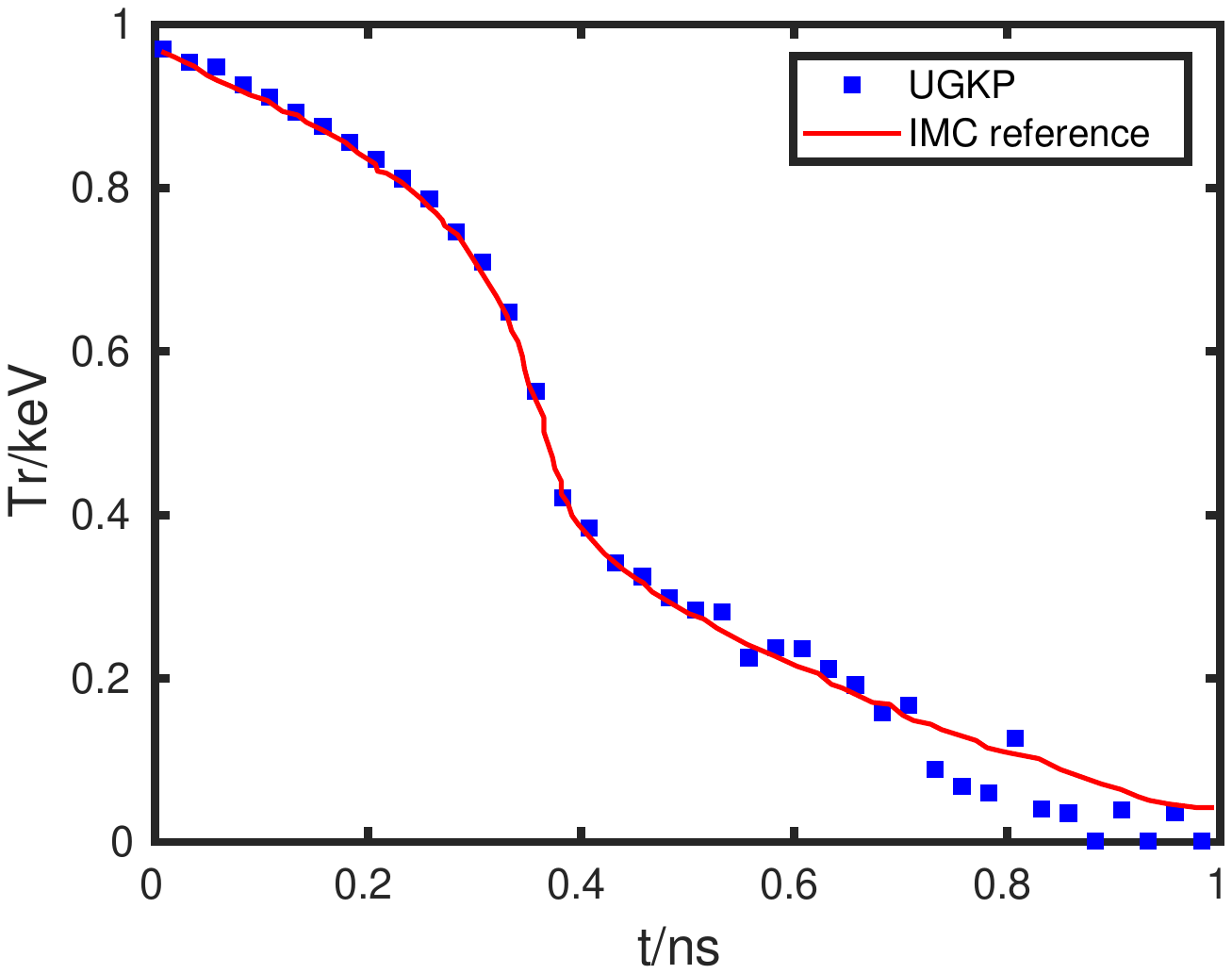} 
}
  \subfloat[Material temperature.]{
 \includegraphics[width=0.48\textwidth,
  trim={15mm 80mm 20mm 80mm}, clip]{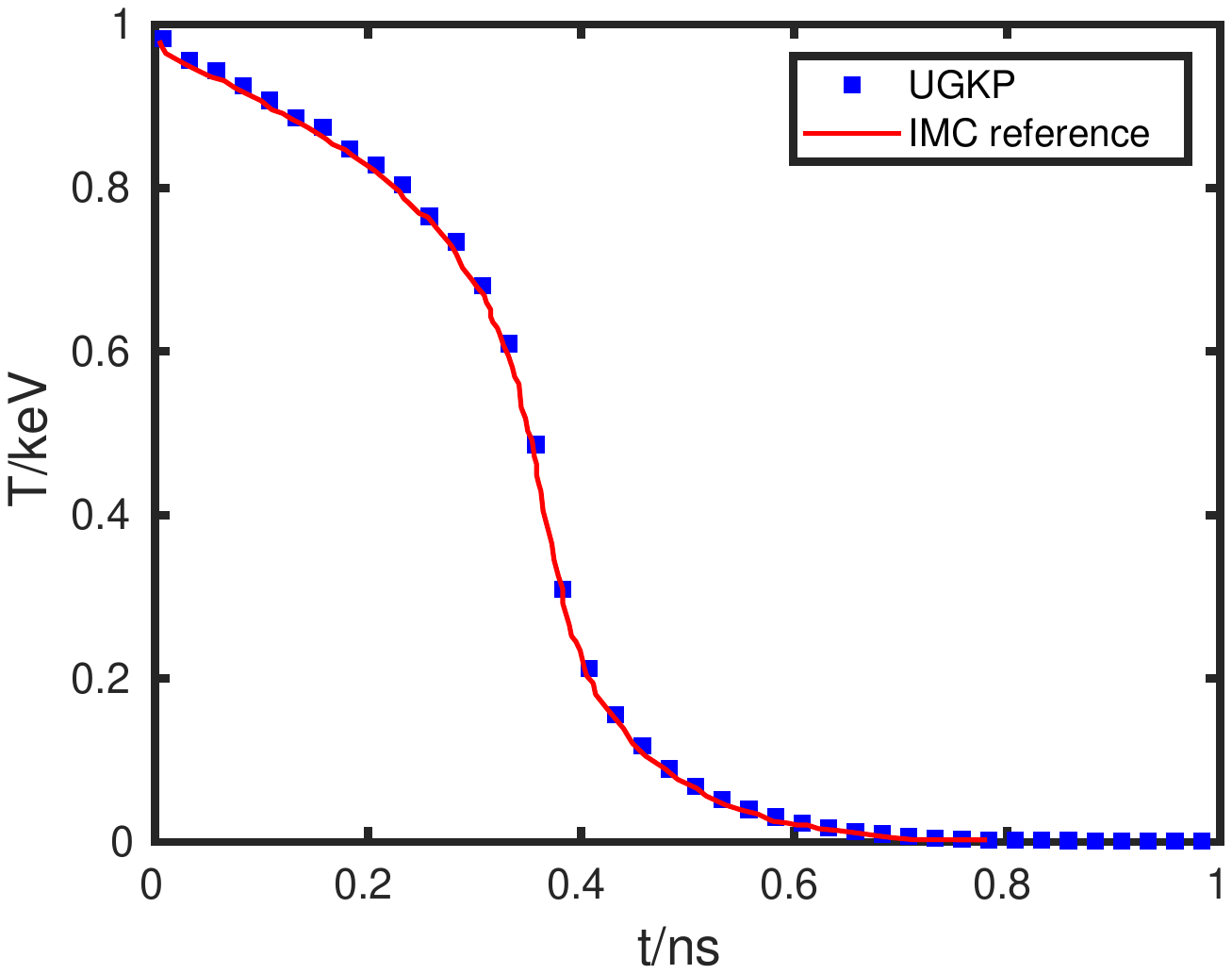} 
}
  \caption{The radiation and material temperature of the frequency-dependent Marshak Wave problem at time $t = 1 \mathrm{ns}$.}
  \label{fig:mf_marshak}
\end{figure}

In our simulations, the computation domain is $[0 \mathrm{cm}, 5 \mathrm{cm}]$ and the UGKP method uses a uniform mesh size of $\Delta x = 0.005 
\mathrm{cm}$. The time step is $1.3\times 10^{-4} \mathrm{ns}$.
Figure \ref{fig:mf_marshak}
presents the radiation and material temperatures computed by the UGKP method at the simulation time of $1 \mathrm{ns}$ 
and compares them with the IMC solution provided in 
\cite{sun2015asymptotic2}. The radiation temperature is defined as
\begin{equation}
T_r = \sqrt[4]{\dfrac{\rho}{a c}}.
\end{equation}
A good agreement has been obtained between the UGKP and IMC reference solutions for 
both the radiation and material temperatures. For this example, the IMC solution in \cite{sun2015asymptotic2} takes $1344$ minutes
to reach the simulation time of $1 \mathrm{ns}$, while our implementation of the UGKP method takes $349$ minutes. 
Therefore, the UGKP method is more efficient than
the IMC method for this case.

\subsection{Marshak wave problem with heterogeneous opacity}
The fourth example we consider consists a jump in the absorption coefficient.
Within a computation domain of $[0 \mathrm{cm}, 3 \mathrm{cm}]$, the 
function $\sigma(\nu,T)$ takes the following form:
\begin{equation}
\sigma(\nu,T) = \left\{\begin{array}{l}
\dfrac{10}{(h\nu)^3 \sqrt{kT}}, \quad \text{if}~ 0 \mathrm{cm} \leq x \leq 2 \mathrm{cm},\\[3mm]
\dfrac{1000}{(h\nu)^3 \sqrt{kT}}, \quad\text{else}.
\end{array}
\right.
\end{equation}
Therefore, at any arbitrary frequency, there is a discontinuity at the material interface of
$x = 2\mathrm{cm}$. The radiation always travels through a relatively optically thin region
to a relatively optically thick one.
The specific heat capacity is held constant at $0.1 \mathrm{GJ}/\mathrm{keV}/\mathrm{cm}^3$.  The initial material temperature is 
$T_0 = 10^{-3} \mathrm{keV}$. The initial specific intensity is isotropic with respect to the angular variable and a Planckian at $T_0$ with respect to 
frequency. 
The specific intensity on the left boundary is kept constant at an isotropic angular distribution and  
a Planckian frequency distribution associated with a temperature of $1 \mathrm{keV}$. A reflective boundary condition is imposed
on the right boundary. This example tests a method's ability to treat discontinuity in opacity.
\begin{figure}[htbp]
  \centering 
  \subfloat[Radiation temperature.]{
  \includegraphics[width=0.48\textwidth,
  trim={15mm 80mm 20mm 80mm}, clip]{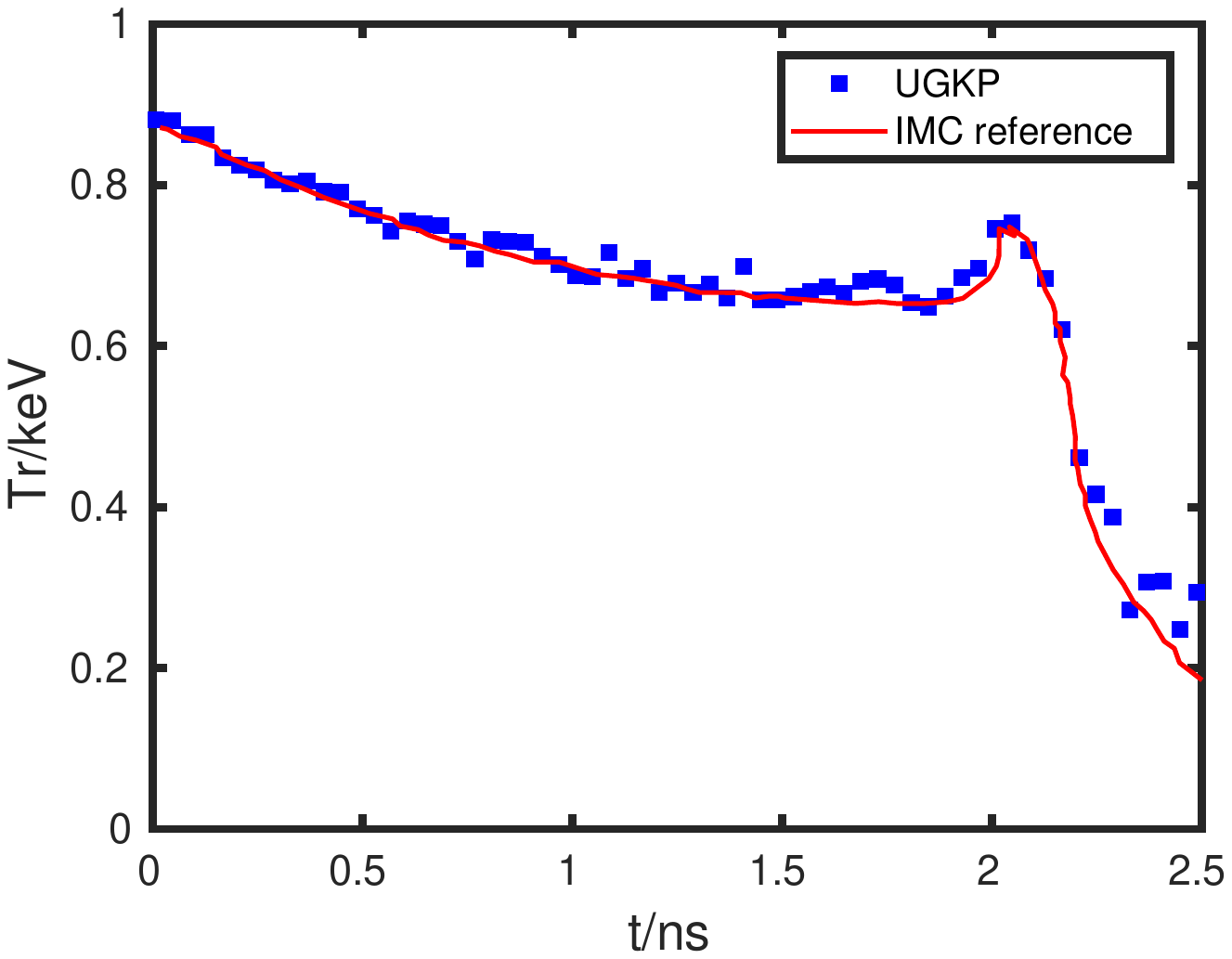} 
}
  \subfloat[Material temperature.]{
 \includegraphics[width=0.48\textwidth,
  trim={15mm 80mm 20mm 80mm}, clip]{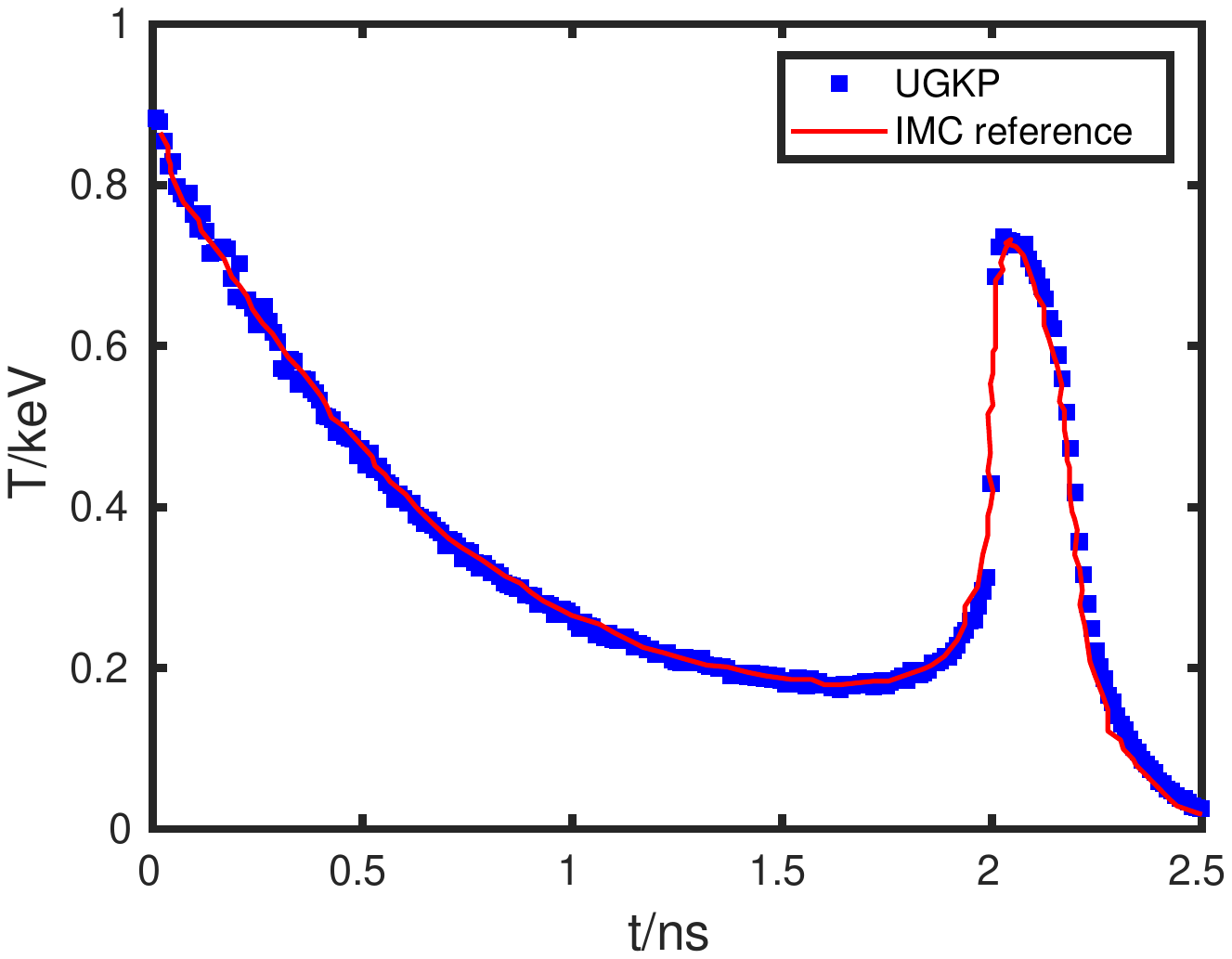} 
}
  \caption{The radiation and material temperature of the frequency-dependent Marshak Wave problem with heterogeneous opacity 
  at time $t = 1 \mathrm{ns}$.}
  \label{fig:ht_marshak}
\end{figure}

The UGKP method uses a uniform mesh size of $\Delta x = 0.005 
\mathrm{cm}$. The time step is taken to be $1.3 \times 10^{-4} \mathrm{ns}$. Figure \ref{fig:ht_marshak} compares the material and radiation temperatures computed by the UGKP method 
with the IMC solution provided in 
\cite{sun2015asymptotic2} at the simulation time of $1 \mathrm{ns}$, and good agreement could be observed. For this example, the IMC solution in \cite{sun2015asymptotic2} takes $363$ minutes
to reach the simulation time of $1 \mathrm{ns}$, while our implementation of the UGKP method takes $121$ minutes, 
showing the UGKP method to be 
more efficient than
the IMC method for this case.

\subsection{A hohlraum problem for the gray equation of transfer}\label{sec:grayHohlraum}
For the fifth example, we study the hohlraum problem for the gray equation of transfer.
This problem describes the heating of a cavity by a radiation source.
It is of wide interest in the literature and has also been 
studied in \cite{mcclarren2010robust, steinberg2022multi}. 
\begin{figure}[htbp]
  \centering 
  \includegraphics[width=0.5\textwidth]{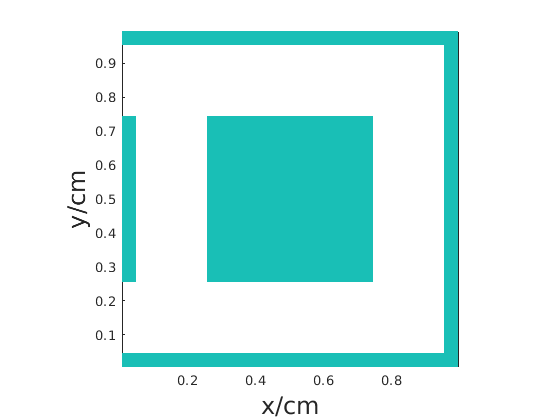}
  \caption{The layout of the hohlraum problem for the gray equation of transfer. The green regions are where $(x,y)\in[0,0.05] \times [0.25,0.75]$, and $(x,y) \in [0.25, 0.75]\times [0.25,0.75]$, $(x,y) \in [0,1]\times [0,0.05]$, $(x,y) \in [0,1]
  \times [0.95,1]$ and $(x,y) \in [0.95,1] \times [0,1]$.}
  \label{fig:grayHohlraum_setup}
\end{figure}

The layout of the problem is shown in Figure \ref{fig:grayHohlraum_setup}. 
The white region within the computation domain of $[0 \mathrm{cm}, 1 \mathrm{cm}] 
\times [0 \mathrm{cm}, 1\mathrm{cm}]$ is a vacuum. In our simulation, we take an absorption 
coefficient of $\sigma = 10^{-8} \mathrm{cm}^{-1}$ and a specific heat capacity of $C_v = 10^{-4} \mathrm{GJ}/\mathrm{keV}/\mathrm{cm}^3$ for this region. The green
regions are filled with material satisfying $\sigma = \dfrac{100}{(T/T_{\mathrm{keV}})^3} \mathrm{cm}^{-1}$, where $T_{\mathrm{keV}} = 1\mathrm{keV}$. The
specific heat capacity $C_v$ for the green regions are $0.3 \mathrm{GJ}/\mathrm{keV}/\mathrm{cm}^3$.
The initial material temperature is uniform at $10^{-3} \mathrm{keV}$. The radiation and material temperatures
are initially at equilibrium. The initial specific intensity is isotropic in the angular variable and a Planckian
in the frequency variable. 
The entire left boundary is kept constant with an angularly 
isotropic specific intensity of $1 \mathrm{keV}$ black body source. The reflective boundary is imposed on the
right. The upper and lower boundaries are kept constant at a specific intensity which is a Planckian 
corresponding to a temperature of $10^{-3} \mathrm{keV}$.

For this problem, the material is initially cold and therefore optically thick. Its opacity decreases as radiation heat up the material. Therefore,
the absorption coefficient varies over a wide range in time and space. The presence of a vacuum in the computation domain further complicates the 
problem and makes it even more challenging. It was shown \cite{mcclarren2010robust} that for this problem, the diffusion approximation could not capture the correct physics, and therefore it is necessary
to simulate the original radiative transfer equation \eqref{eq:rte}. 
\begin{figure}[htbp]
  \centering 
  \subfloat[Radiation temperature.]{
  \includegraphics[width=0.48\textwidth]{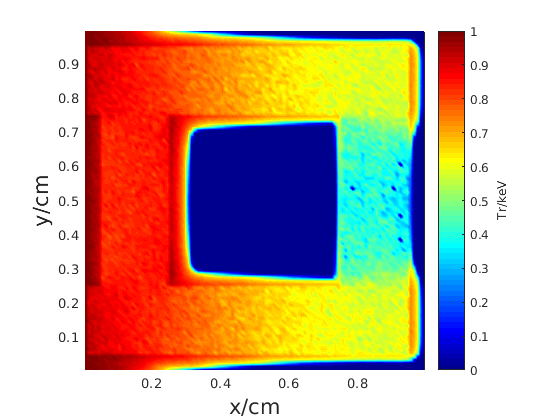}
}
 \subfloat[Material temperature.]{
 \includegraphics[width=0.48\textwidth]{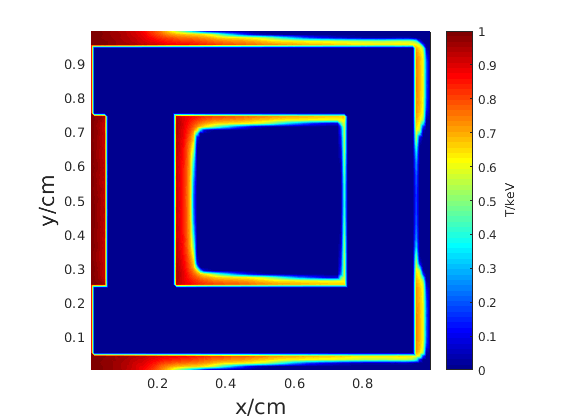}
}
  \caption{The radiation and material temperatures of the hohlraum problem for the gray equation of transfer 
  at time $t = 1 \mathrm{ns}$.}
  \label{fig:grayHohlraum}
\end{figure}

The UGKP method uses a uniform spatial mesh of $100 \times 100$, and the time step is taken to be $2.7\times 10^{-4} \mathrm{ns}$. Figure \ref{fig:grayHohlraum} presents the contour plots of the UGKP solution for the radiation temperature and the material temperature at the simulation time $1 \mathrm{ns}$. Our solutions are in 
general agreement with the solutions presented in \cite{mcclarren2010robust, steinberg2022multi}. Also,
it has been stated in \cite{mcclarren2010robust} that for this problem, the solution should preserve two properties: firstly, there should 
be a non-uniform heating of the central block.
Secondly, there should be less radiation directly behind the block than those regions within sight of the source. The UGKP solutions satisfy these 
two properties, verifying the accuracy of our method.

\subsection{Frequency-dependent hohlraum problem}
The final example we present is the hohlraum problem for the multi-frequency radiative transfer equation. The setup of this problem is the same as that studied in
\cite{Hammer2019}. The layout of this problem is shown in Figure 
\ref{fig:mfHohlraum_setup}. The computation domain is $[0 \mathrm{cm}, 0.65 \mathrm{cm}] \times [0 \mathrm{cm},
1.4 \mathrm{cm}]$. The white region within the computation domain is almost a vacuum, and we take the absorption coefficient to be $\sigma = 10^{-8}
\mathrm{cm}^{-1}$ and the specific heat capacity $C_v$ to be $10^{-4} \mathrm{GJ}/\mathrm{keV}/\mathrm{cm}^3$.  The green regions have a frequency-dependent opacity with the form
\begin{equation}
\sigma(\nu, T) =  \dfrac{1000(1-\ie^{-\frac{h\nu}{kT}})}{(h\nu)^3}.
\end{equation}
The specific heat capacity within the green regions is $C_v = 0.3 \mathrm{GJ}/\mathrm{keV}/\mathrm{cm}^3$. The initial material temperature is $T_0 = 10^{-3} \mathrm{keV}$, and radiation and material are initially at equilibrium. Therefore, the initial specific intensity is
\begin{equation}
I_0(\bx,\bsOmega,\nu) = B(\nu,T_0).
\end{equation}
The reflective boundary condition is imposed on the left boundary. The lower boundary is kept constant with an angularly isotropic specific intensity of 
$0.3 \mathrm{keV}$ black body source. The upper and right  boundaries are kept constant at a specific intensity which is a Planckian corresponding to a temperature of $10^{-3} \mathrm{keV}$.

\begin{figure}[htbp]
  \centering 
  \includegraphics[width=0.25\textwidth]{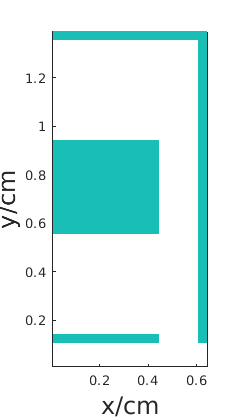}
  \caption{The layout of the hohlraum problem for the frequency-dependent equation of transfer. The green regions are where $(x,y)\in[0,0.45]
  \times [0.1,0.15]$, and $(x,y) \in [0, 0.45]\times [0.55,0.95]$, $(x,y) \in [0.6,0.65]\times [0.1,1.4]$, 
  and $(x,y) \in [0,0.65] \times [1.35,1.4]$.}
  \label{fig:mfHohlraum_setup}
\end{figure}

This problem is even more complicated than the hohlraum problem for the gray equation of
transfer studied in Section \ref{sec:grayHohlraum}, as the material opacity depends on frequency. For low frequency, the 
material is optically thick, but for the higher frequency it becomes optically thin, and the opacity varies widely within the 
whole frequency range.
The presence of the vacuum in the computation domain and the fact the opaque walls within the cavity are sometimes 
very narrow further complicates the 
problem, making it a very challenging benchmark. 
\begin{figure}[htbp]
  \centering 
  \subfloat[Radiation temperature.]{
  \includegraphics[width=0.48\textwidth]{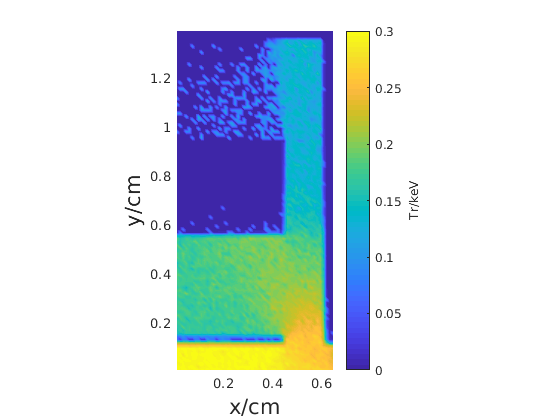}
}
  \subfloat[Material temperature.]{
 \includegraphics[width=0.48\textwidth]{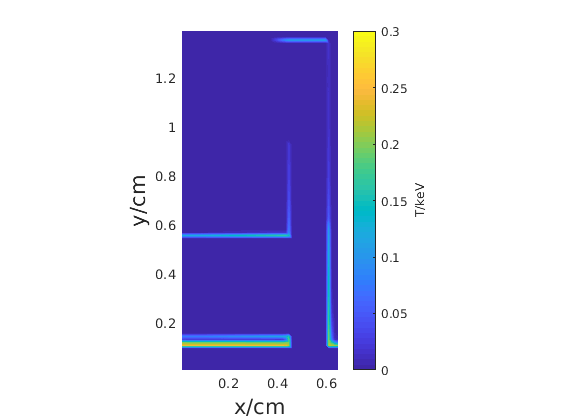}
}
  \caption{The radiation and material temperature of hohlraum problem for the frequency-dependent equation of transfer 
  at time $t = 10 \mathrm{ns}$.}
  \label{fig:mfHohlraum}
\end{figure}

In computing this example, a limiter is employed to suppress artificial 
oscillations at material interfaces. Based on similar considerations as 
\cite{xu2001gas}, we define
\begin{equation}\label{eq:tau}
\tau = \dfrac{\epsilon^2}{c \sigma} + 10 \Delta t\dfrac{T_l-T_r}{T_l+T_r}
\dfrac{2}{\Delta x_l + \Delta x_r}
\end{equation}
at material interfaces. In \eqref{eq:tau}, $T_l$, $T_r$, and $\Delta x_l$,
$\Delta x_r$ are the left and right values of material temperature and grid
length on both sides of the interface. In computing the distance to absorption
for particles crossing the interface, equation \eqref{eq:col_dis} is replaced
by 
\begin{equation}
d_{\mathrm{COL}} = c |\ln \xi| \tau.
\end{equation}
For computing the macroscopic solver, $\kappa^{eff}$ in equation 
\eqref{eq:keff} is multiplied by a factor of 
\begin{equation}
f = \dfrac{1- \exp\left(-\Delta t/\tau\right)}{1-\exp\left(-c \sigma \Delta t/\epsilon^2\right)}.
\end{equation}
We note that this limiter is only applied to material interfaces where there 
is a large jump in material temperature, and does not affect the asymptotic 
preserving properties of the UGKP method.

The UGKP method uses a uniform spatial mesh of $52 \times 112$, and the time step is taken to be $1.7\times 10^{-4} \mathrm{ns}$. The contour plots of the UGKP solution for 
the radiation temperature and the material temperature at the simulation time $10 \mathrm{ns}$ are shown in Figure \ref{fig:mfHohlraum}. The UGKP solutions are in rough 
agreement with the solutions presented in \cite{Hammer2019}, verifying the accuracy of our method. Also,
we observe a smooth material temperature profile, both at the lower wall and the lower side of the center block, which shows that unlike the 
deterministic particle method discussed in \cite{Hammer2019}, the UGKP method does not suffer from ray effects. 
The UGKP solution for the radiation temperature also does not show 
ray effects \cite{Morel2003Ray}, though the solution is noisier than the solutions presented in \cite{Hammer2019}.


\section{Conclusions}
\label{sec:conclusion}
We have extended the unified gas-kinetic particle method (UGKP) studied previously in \cite{liweiming2020, shi2020}
to multi-frequency radiative transfer. Particle sampling combined with analytical representation of the specific intensity's
reliance on frequency enables our method to preserve the Rosseland diffusion limit. Compared with the implicit Monte
Carlo method, our method does not track particle trajectory after first collision event, therefore a high efficiency is obtained.
Numerical analysis shows that our UKGP method is asymptotic preserving for both free-streaming and diffusion limit. We 
demonstrate with numerical examples the accuracy and efficiency of the proposed UGKP method for benchmark problems
where there is a wide variation of opacity.
Future works includes extension to arbitrary quadrilateral mesh and cylindrical geometries.

\section*{Acknowledgements}

We thank Dr. Yajun Zhu from HKUST for helpful discussions. The authors are
partially supported by National Key R\&D Program of China (2022YFA1004500).
Weiming Li is partially supported by the National Natural Science 
Foundation of China (12001051). Chang Liu is partially supported by the National Natural Science Foundation of China (12102061). Peng Song is partially supported by the National Natural Science Foundation of China (12031001).


\end{document}